\documentclass[12pt,reqno]{amsart}
\usepackage{amssymb}
\usepackage{graphicx}
\usepackage{xcolor}
\usepackage{longtable}
\usepackage{float}

\usepackage[all]{xy}
\DeclareFontFamily{U}{mathb}{\hyphenchar\font45}
\DeclareFontShape{U}{mathb}{m}{n}{
      <5> <6> <7> <8> <9> <10> gen * mathb
      <10.95> mathb10 <12> <14.4> <17.28> <20.74> <24.88> mathb12
      }{}
\DeclareSymbolFont{mathb}{U}{mathb}{m}{n}
\DeclareMathSymbol{\righttoleftarrow}{3}{mathb}{"FD}

\oddsidemargin 15mm
\evensidemargin 15mm
\textwidth 130mm

\theoremstyle{plain}
\newtheorem{prop}{Proposition}[section]
\newtheorem{theo}[prop]{Theorem}

\newtheorem{lemm}[prop]{Lemma}
\theoremstyle{remark}

\theoremstyle{definition}

\newtheorem{rema}[prop]{Remark}

\newtheorem{exam}[prop]{Example}
\numberwithin{equation}{section}

\def\rP{{\mathrm P}}

\def\rB{{\mathrm B}}

\def\fD{{\mathfrak D}}

\def\fK{{\mathfrak K}}

\def\fQ{{\mathfrak Q}}

\def\fS{{\mathfrak S}}

\def\fS{{\mathfrak S}}

\def\bA{{\mathbb A}}
\def\bG{{\mathbb G}}
\def\bP{{\mathbb P}}
\def\bQ{{\mathbb Q}}

\def\bZ{{\mathbb Z}}
\def\bR{{\mathbb R}}

\def\bN{{\mathbb N}}

\def\rH{{\mathrm H}}

\DeclareMathOperator{\rk}{rk}

\def\lra{\longrightarrow}

\def\res{\mathrm{res}}

\def\Pic{\mathrm{Pic}}
\def\End{\mathrm{End}}

\def\Aut{\mathrm{Aut}}

\def\GL{\mathsf{GL}}

\def\PGL{\mathsf{PGL}}

\def\Ext{\mathrm{Ext}}

\def\lim{\mathrm{lim}}

\def\Ker{\mathrm{Ker}}

\def\rN{\mathrm{N}}
\def\rM{\mathrm{M}}

\def\PL{\mathrm{PL}}

\newcommand{\eqto}{\stackrel{\lower1.5pt\hbox{$\scriptstyle\sim\,$}}\to}

\begin{document}
\title[Equivariant unirationality]{Equivariant unirationality of tori in small dimensions}

\author[Yu. Tschinkel]{Yuri Tschinkel}
\address{
  Courant Institute,
  251 Mercer Street,
  New York, NY 10012, USA
}
\email{tschinkel@cims.nyu.edu}

\address{Simons Foundation\\
160 Fifth Avenue\\
New York, NY 10010\\
USA}

\author[Zh. Zhang]{Zhijia Zhang}
\address{
  Courant Institute,
  251 Mercer Street,
  New York, NY 10012, USA
}
\email{zz1753@nyu.edu}

\date{\today}

\begin{abstract}
We study equivariant unirationality of actions of finite groups on tori of small dimensions.
\end{abstract}

\maketitle

\section{Introduction}
\label{sec.intro}

Rationality of tori over nonclosed fields is a well-established and active area of research,  going back to the work of Serre, Voskresenski\u i, Endo, Miyata, Colliot-Th\'el\`ene, Sansuc, Saltman,  Kunyavski\u i (classification of rational tori in dimension 3), and to more recent contributions of Lemire and Hoshi--Yamasaki (stably rational classification in dimensions $\le 5$), see, e.g., \cite{hoshiyamasaki} for a summary of results and extensive background material. 

In pursuing analogies between birational geometry over nonclosed fields and equivariant birational geometry, i.e., birational geometry over the classifying stack $BG$, where $G$ is a finite group, it is natural to consider algebraic tori in both contexts. While some of the invariants have a formally similar flavor, e.g., invariants of the geometric character lattice as a Galois, respectively, $G$-module, 
there are also striking differences. For example, a major open problem is to find examples of stably rational but nonrational tori over nonclosed fields. Over $BG$, there are examples already in dimension 2 \cite[Section 9]{lemire}. 
Furthermore, ``rational'' tori over $BG$ need not have $G$-fixed points!

To make this dictionary more precise: in the equivariant setup, one studies  
regular, but not necessarily generically free, actions of finite groups $G$ on smooth projective rational varieties $X$, over an algebraically closed field of characteristic zero. 
The following properties, the equivariant analogs of the notions of {\em (stable) rationality} and {\em unirationality}, have attracted attention: 

\begin{itemize}
    \item {\bf (L), (SL)} {\em lineariazability}, respectively, {\em stable linearizability}:
     there exists a linear representation $V$ of $G$ and a $G$-equivariant birational map 
     $$
     \bP(V)\dashrightarrow X, \quad \text{ respectively, } \quad \bP(V)\dashrightarrow X \times \bP^m,
     $$
     with trivial action on the $\bP^m$-factor, 
    \item {\bf (U)} {\em unirationality}: there exists a linear representation $V$ of $G$ and a $G$-equivariant dominant rational  map 
    $$
    \bP(V)\dashrightarrow X.$$
\end{itemize}
Property {\bf (U)} is also known as {\em very versality} of the $G$-action. It was explored in the context of essential dimension in, e.g., \cite{DR}. It 
has been studied for del Pezzo surfaces in \cite{Duncan}, and for toric varieties in \cite{Duncaness,KT-toric2}.

\ 

\noindent 
A necessary condition for both {\bf (SL)} and {\bf (U)} is 
\begin{itemize}
\item  {\bf(A)}:
for every abelian subgroup $A\subseteq G$ one has 
$
X^A\neq \emptyset.
$
\end{itemize}
A necessary condition for {\bf (SL)} is:
\begin{itemize}
\item {\bf (SP)}:
the Picard group $\Pic(X)$ is a stably permutation $G$-module. 
\end{itemize}
A necessary condition for {\bf (U)} is:
\begin{itemize} 
\item  {\bf(T)}: the action lifts to the universal torsor, see \cite[Section 5]{KT-toric2} and Section~\ref{sect:gene} for more details. 
\end{itemize}

These conditions are equivariant stable birational invariants of smooth projective varieties. 
We have
$$
{\bf (SL)}\quad  \Rightarrow \quad {\bf (U)},
$$
but the converse fails already for del Pezzo surfaces: there exist quartic del Pezzo surfaces satisfying {\bf (U)} but failing {\bf (SP)}, and thus {\bf (SL)}. 
By \cite[Theorem 1.4]{Duncan}, Condition {\bf (A)} is sufficient for {\bf (U)}, for regular, generically free actions 
on del Pezzo surfaces of degree $\ge 3$; same holds for 
smooth quadric threefolds, or intersections of two quadrics in $\bP^5$, by \cite{CTZ-uni}.
In \cite{KT-toric2} it was shown that regular, not necessarily generically free, actions on toric varieties are unirational, if and only if {\bf (T)} is satisfied. Using this, and \cite[Proposition 12]{HT-torsor}, we have, for $G$-actions on toric varieties arising from an injective homomorphism $G\hookrightarrow \Aut(T)$,
$$
{\rm \bf (U)}  + {\rm \bf (SP)}\Longleftrightarrow  {\rm \bf (SL)}.
$$

Our goal in this note is to obtain explicit, group-theoretic, criteria for 
stable linearizability and unirationality of actions of finite groups $G$ on 
smooth projective equivariant compactifications of tori $T=\bG_m^n$ in small dimensions. When $n=2$, 
Condition {\bf (A)} implies {\bf (U)} and {\bf (SL)} \cite{Duncan,HT-torsor}.  
Our main result is:

\begin{theo}
 \label{thm:main}
Let $T=\bG_m^3$, $G\subset \Aut(T)$ be a finite group, and 
$X$ a smooth projective $G$- and  $T$-equivariant compactification of $T$. Let 
$$
\pi^*: G\to \GL(\rN)
$$
be the induced representation on the cocharacter lattice $\rN$ of $T$.  
Assume that the $G$-action on $X$ satisfies Condition {\bf (A)}. Then 
\begin{itemize}
    \item the $G$-action is {\bf (U)} if and only if $\pi^*(G)$ does not contain, up to conjugation, the group
\begin{equation}
    \label{eqn:bad}
{\tiny \left\langle\begin{pmatrix}  0 &1 &-1\\
    1 &0 &-1\\
     0 & 0 & -1
   \end{pmatrix}, \begin{pmatrix}
       -1&  0 &0\\
    -1 &0 &1\\
 -1 & 1 & 0
\end{pmatrix}\right\rangle\simeq C_2^2,}
\end{equation}
\item 
the $G$-action is {\bf (SL)} if and only if $\pi^*(G)$ does not contain, up to conjugation, the group in \eqref{eqn:bad} 
or any of the groups
$$
{\tiny \left\langle
\begin{pmatrix}
    0&1&0\\
    0&0&1\\
    -1&-1&-1
\end{pmatrix},
\begin{pmatrix}
    -1&0&0\\
    0&-1&0\\
    0&0&-1
\end{pmatrix}\right\rangle\simeq C_2\times C_4,}
$$
$$
{\tiny \left\langle
\begin{pmatrix}
    0&0&1\\
    -1&-1&-1\\
    1&0&0
\end{pmatrix},
\begin{pmatrix}
    -1&-1&-1\\
    0&0&1\\
    0&1&0
\end{pmatrix},
\begin{pmatrix}
    -1&0&0\\
    0&-1&0\\
    0&0&-1
\end{pmatrix}\right\rangle\simeq C_2^3.}
$$
\end{itemize}
In particular, 
$$
{\bf (A)} + {\bf (SP)} \Longleftrightarrow {\bf (SL)}. 
$$

\end{theo}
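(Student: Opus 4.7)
The plan is to reduce each of the properties in question to combinatorial/cohomological conditions on the lattice action $\pi^*: G \to \GL(\rN) \subset \GL_3(\bZ)$, and then to run through the finite list of conjugacy classes of finite subgroups of $\GL_3(\bZ)$ (the Bieberbach/Tahara classification, as arranged in the GAP crystallographic library). All relevant invariants depend only on $\pi^*(G)$, so the statement of the theorem is a purely group-theoretic/lattice-theoretic assertion once this reduction is made.

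First I would invoke \cite{KT-toric2} to replace Condition {\bf (U)} by the intrinsic torsor-lifting condition {\bf (T)}: for toric varieties, a $G$-action satisfies {\bf (U)} if and only if it lifts to the universal torsor. For a toric variety, {\bf (T)} is controlled by the vanishing of an explicit extension class built from the character lattice $M$ and the Picard lattice, and is therefore a cohomological property of the $G$-module structure on $M$. Similarly, Condition {\bf (SP)}, that $\Pic(X)$ is stably permutation as a $G$-module, depends only on $\pi^*(G)$ via its \emph{flabby class} $[\Pic(X)]^{fl}$, computable by the Endo--Miyata algorithm as implemented in the Hoshi--Yamasaki package \cite{hoshiyamasaki}. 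Both conditions are equivariant stable birational invariants, so they are independent of the chosen smooth projective compactification $X$.

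The main body of the proof then becomes a case analysis over conjugacy classes. For each class of $\pi^*(G) \subset \GL_3(\bZ)$ one (i) tests whether some smooth projective toric compactification satisfies Condition {\bf (A)} — a combinatorial condition on the fan, demanding an $A$-fixed cone for every abelian $A \subseteq \pi^*(G)$; (ii) for those that pass, computes the {\bf (T)}-obstruction and the flabby class. The output is that {\bf (T)} fails on exactly one conjugacy class, represented by the $C_2^2$ of \eqref{eqn:bad}, and that the flabby class is additionally nontrivial on exactly two further classes, namely the $C_2 \times C_4$ and $C_2^3$ displayed in the statement. Combining this with the equivalence ${\bf (U)} + {\bf (SP)} \Longleftrightarrow {\bf (SL)}$ from \cite[Proposition 12]{HT-torsor} yields the two bulleted characterizations.

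For the final implication ${\bf (A)} + {\bf (SP)} \Longleftrightarrow {\bf (SL)}$, the forward direction is immediate. For the reverse, using again ${\bf (U)} + {\bf (SP)} \Longleftrightarrow {\bf (SL)}$, it suffices to check that {\bf (A)} + {\bf (SP)} forces {\bf (U)}. If not, $\pi^*(G)$ would contain a conjugate of the bad $C_2^2$ from \eqref{eqn:bad}; a direct flabby-class computation for this particular $C_2^2$-lattice shows that its Picard module is not stably permutation, contradicting {\bf (SP)}. The main obstacle of the proof is step (ii): verifying the {\bf (T)}-obstruction and computing flabby classes across the full list of $\GL_3(\bZ)$-conjugacy classes is largely computer algebra, but isolating precisely the three bad conjugacy classes and presenting canonical matrix generators for them is the delicate bookkeeping step.
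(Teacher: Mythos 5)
Your reduction rests on the claim that all the relevant invariants depend only on the lattice image $\pi^*(G)\subset\GL_3(\bZ)$, so that the theorem becomes a finite computer check over conjugacy classes of finite subgroups of $\GL_3(\bZ)$. That claim is false, and it is precisely where the difficulty of the theorem lies. The group $G\subset\Aut(T)$ sits in an extension $1\to G_T\to G\to\bar G\to 1$ with $G_T\subset T(k)$ a group of translations, and the obstruction class $\beta(X,G)$ controlling {\bf (T)}/{\bf (U)} lives in $\rH^2(G,\Pic(X)^\vee\otimes k^\times)$ --- cohomology of the \emph{full} group $G$, built from the Yoneda product with the extension $1\to k^\times\to k(T)^\times\to\rM\to 0$, in which the translation part acts nontrivially. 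Likewise Condition {\bf (A)} is not a combinatorial condition on the fan: the abelian subgroups $A\subseteq G$ one must test mix translations with lattice automorphisms (e.g.\ a lift of $\diag(-1,-1,-1)$ together with a $2$-torsion translation generates a fixed-point-free $C_2^2$, as in Lemma~\ref{lemm:exclude}). Proposition~\ref{prop:dim32group} shows explicitly that for a fixed lattice image the answer depends on $G_T$: for $\pi^*(G)=\fK_9$ one has {\bf (U)} when $G_T=1$ and not when $G_T\neq 1$. So ``the {\bf (T)}-obstruction of a conjugacy class in $\GL_3(\bZ)$'' is not well defined, and there are infinitely many groups $G$ lying over each lattice class, so no finite computer-algebra sweep of the kind you describe can settle the first bullet.

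Concretely, what your plan omits is the entire content of Sections~\ref{sect:uni} and~\ref{sect:exept}: the reduction to Sylow subgroups (Lemma~\ref{lemm:beta}), the case-by-case interaction of $G_T$ with each of the classes $C_2$, $C_4$, $\fK_1,\dots,\fK_9$, $\fD_4$ (where Condition {\bf (A)} is translated into explicit constraints on $G_T$, fixed points are produced by hand, or Bogomolov-multiplier arguments such as Lemma~\ref{lemm:bog} are invoked), and, for the exceptional class $\fK_9$, the identification of the possible groups $G$ as the infinite families $\fQ_{2^n}$, $\fD_{2^{n-1}}$, $\mathfrak{SD}_{2^n}$ followed by explicit cocycle computations with free resolutions of these groups to show $\beta(X,G)\neq 0$. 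None of this is captured by a flabby-class/Endo--Miyata computation, which (as you correctly use for {\bf (SP)}) sees only the $\bar G$-module $\Pic(X)$. Your treatment of the {\bf (SL)} criterion via Theorem~\ref{thm:torict} and of the final equivalence ${\bf (A)}+{\bf (SP)}\Leftrightarrow{\bf (SL)}$ is consistent with the paper once unirationality is settled, but as written the proposal does not prove the unirationality statement.
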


Here is the roadmap of the paper: In Section~\ref{sect:gene} we recall basic toric geometry and group cohomology. In Section~\ref{sect:surf} we 
provide details on equivariant geometry of toric surfaces. In Section~\ref{sect:3-folds} we recall the construction of equivariant smooth projective models of 3-dimensional tori, following \cite{kunyavskii}. In Section~\ref{sect:uni} we prove the main technical lemmas needed for Theorem~\ref{thm:main}. A particularly difficult case, with $\pi^*(G)$ given by \eqref{eqn:bad},  
is outsourced to Section~\ref{sect:exept}.
In Section~\ref{sect:final} we summarize the main steps of the proof of Theorem~\ref{thm:main}.

\medskip

\noindent
{\bf Acknowledgments:} 
The first author was partially supported by NSF grant 2301983.

\section{Generalities}
\label{sect:gene}

We work over an algebraically closed field $k$ of characteristic zero.

\subsection*{Automorphisms}

Let $T=\bG_m^n$ be an algebraic torus over $k$.
The automorphisms of $T$ admit a description via the exact sequence
$$
1\to T(k)\to \Aut(T)\stackrel{\pi}{\lra} \GL(\rM) \to 1,
$$
where $\rM:=\mathfrak X^*(T)$ is the character lattice of $T$, which is dual to the cocharacter lattice $\rN$. For any finite subgroup $G\subset\Aut(T)$,
we have an exact sequence
\begin{equation} 
\label{eqn:exte}
1\to G_T \to G\stackrel{\pi}{\to} \bar{G}\to 1, \quad G_T:=T(k)\cap G. 
\end{equation}
The cases where $G$ fixes a point on $T$, without loss of generality $1\in T$, 
were studied in the context of geometry over nonclosed fields in \cite{kunyavskii, hoshiyamasaki}.
By way of contrast, here we allow the more general actions considered in \cite{HT-torsor}, where $X$ is an equivariant compactification of a {\em torsor} under a $G$-torus, these can still be (stably) linearizable. 

As a convention, the $G$-action is from the right throughout the paper. For example, choosing appropriate coordinates $\{ t_1,t_2,t_3\}$ on $T=\bG_m^3$, 
the matrices in \eqref{eqn:bad} correspond to actions on $T$ given by
$$
(t_1,t_2,t_3)\mapsto (t_2,t_1,\frac{1}{t_1t_2t_3}),\quad (t_1,t_2,t_3)\mapsto (\frac{1}{t_1t_2t_3},t_3,t_2).
$$

\subsection*{Smooth projective models}

To obtain a smooth, projective, $G$- and $T$-equivariant compactification $X$ of $T$, it suffices to choose a $\pi^*(G)$-invariant complete regular {\em fan} $\Sigma$ in the lattice of cocharacters $\rN$,
see, e.g., \cite[Section 1.3]{BT-aniso}.  Indeed, the translation action by $T(k)$ extends to any such compactification of $T$, by definition. Such a choice of $X$ and $\Sigma$ yields two exact sequences of $G$-modules
\begin{align}\label{eqn:funcseq}
    1\to k^\times \to k(T)^\times \to  \rM\to 0, 
\end{align}
and 
\begin{align}\label{eqn:picseq}
    0\to \rM\to \mathrm{PL}\to \mathrm{Pic}(X)\to 0,
\end{align}
where $\mathrm{PL}$ is a free $\bZ$-module with generators corresponding to 1-dimensional cones in $\Sigma$, or equivalently, irreducible components of $X\setminus T$.

\subsection*{Obstruction class}
The Yoneda product of the extensions
\eqref{eqn:funcseq} and 
\eqref{eqn:picseq} yields a cohomology class 
$$
\beta(X,G)\in\Ext^2(\Pic(X),k^\times)\simeq\rH^2(G,\Pic^\vee\otimes k^\times)\simeq \rH^3(G,\Pic(X)^\vee). 
$$
Effectively, $\beta(X,G)$ can be computed as the image of 
$$
\mathrm{id}_{\Pic(X)}\in \End(\Pic(X))^G
$$ 
under the composition of the following connecting homomorphisms, arising from tensoring \eqref{eqn:funcseq} and 
\eqref{eqn:picseq} by $\Pic(X)^\vee$:
\begin{equation}\label{eqn:compute}
\End(\Pic(X))^G\to\rH^1(G,\Pic(X)^\vee\otimes \rM)\to\rH^2(G,\Pic(X)^\vee\otimes k^\times).   
\end{equation}

As explained in \cite[Section 5]{KT-toric2}, if $Y\to X$ is a $G$-equivariant morphism then  
\begin{equation}
    \label{eqn:betay}
\beta(Y,G)=0 \quad \Rightarrow \quad \beta(X,G)=0. 
\end{equation}
For toric varieties, this is also a consequence of Theorem~\ref{thm:toricge} below.
By basic properties of cohomology, we observe: 

\begin{lemm} 
\label{lemm:beta}
We have
$$
\beta(X,G)=0 \,\,\Longleftrightarrow \,\, \beta(X,G_p)=0 \,\,\text{ for all $p$-Sylow subgroups } G_p\subseteq G.
$$
\end{lemm}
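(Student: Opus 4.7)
The plan is to reduce the statement to the standard cohomological fact that restriction to Sylow subgroups is injective on cohomology.

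First I would verify that the class $\beta(X,G)$ is natural in $G$, in the following precise sense: for any subgroup $H\subseteq G$, the restriction to $H$ of the $G$-module extensions \eqref{eqn:funcseq} and \eqref{eqn:picseq} are precisely the corresponding extensions for the $H$-action, and the Yoneda product commutes with restriction. Hence
\[
\operatorname{res}^G_H\bigl(\beta(X,G)\bigr)=\beta(X,H)\in \rH^2(H,\Pic(X)^\vee\otimes k^\times).
\]
Applied to $H=G_p$, this gives immediately the forward implication: if $\beta(X,G)=0$ then $\beta(X,G_p)=0$ for every $p$-Sylow.

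For the converse, I would invoke the classical fact that for any finite group $G$, any $G$-module $M$, and any prime $p$, the restriction
\[
\operatorname{res}^G_{G_p}:\rH^n(G,M)_{(p)}\;\hookrightarrow\;\rH^n(G_p,M)
\]
is injective on the $p$-primary component, as a consequence of $\operatorname{cor}\circ\operatorname{res}=[G:G_p]\cdot\mathrm{id}$ and the fact that $[G:G_p]$ is coprime to $p$. Since $\rH^2(G,\Pic(X)^\vee\otimes k^\times)$ is a torsion group (annihilated by $|G|$), it decomposes as the direct sum of its $p$-primary components over primes $p$ dividing $|G|$. If $\beta(X,G_p)=0$ for every $p$, then the $p$-primary part of $\beta(X,G)$ vanishes for each $p$, whence $\beta(X,G)=0$.

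There is no real obstacle here; the only thing that needs care is the naturality of $\beta$ under restriction, which is essentially built into the description \eqref{eqn:compute}: the image of $\mathrm{id}_{\Pic(X)}$ under the connecting homomorphisms for $G$ and for $G_p$ agree, because the connecting homomorphisms themselves commute with restriction. Once naturality is in hand, the rest is the Sylow–restriction principle applied to $\rH^2$ with coefficients in $\Pic(X)^\vee\otimes k^\times$.
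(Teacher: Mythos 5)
Your proof is correct and follows essentially the same route as the paper: the paper simply cites the injectivity of the sum of restrictions $\rH^2(G,\rP)\to\oplus_p\rH^2(G_p,\rP)$, which is exactly the Sylow--restriction principle you derive from $\operatorname{cor}\circ\operatorname{res}=[G:G_p]\cdot\mathrm{id}$, together with the (implicitly used) naturality $\operatorname{res}^G_{G_p}\beta(X,G)=\beta(X,G_p)$ that you spell out. Your write-up is just a more detailed version of the paper's one-line argument.
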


\begin{proof}
Since $G$ is finite, for any $G$-module $\rP$, the sum of restriction homomorphisms gives
an embedding 
$$
\rH^2(G,\rP)\to\oplus_{p} \, \rH^2(G_p,\rP),
$$
where $p$ runs over primes dividing $|G|$.
\end{proof}

\subsection*{Bogomolov multiplier}

As explained in \cite[Section 3.6]{HT-torsor}, functoriality implies that 
\begin{equation} 
\label{eqn:xg}
X^G\neq \emptyset \quad  \Rightarrow  \quad 
\beta(X,G)=0. 
\end{equation}
Thus Condition {\bf (A)} forces that 
\begin{equation}\label{eqn:bogoA}
  \beta(X,G)\in \mathrm{B}^3(G,\Pic(X)^\vee),  
\end{equation}
where for any $G$-module $\rP$ and $n\in \bN$, we put
$$
\mathrm{B}^n(G,\rP):=\bigcap_{A} \Ker\left(\rH^n(G,\rP) \stackrel{\res}{\lra} \rH^n(A,\rP)\right), 
$$
the intersection over all abelian subgroups $A\subseteq G$; this 
is the generalization of the Bogomolov multiplier 
$$
\rB^2(G,k^\times),
$$
where the $G$-action on $k^\times$ is trivial, considered in \cite[Section 2]{tz}. 
 Note that while the vanishing of $\beta(X,G)$ is a stable birational invariant, the group $\mathrm{B}^n(G,\Pic(X)^\vee)$ is not, in general, as the following example shows.

\begin{exam}
\label{exam:change}
Let  $G$ be a group with a nontrivial Bogomolov multiplier. Let $V$ be a faithful linear representation of $G$ and 
$X=\bP( {\bf 1}\oplus V)$. Let $\tilde{X}$ be the blowup of $X$ in the $G$-fixed point. 
Then 
 $$
\rB^3(G,\Pic(\tilde{X})^\vee)\neq \rB^3(G,\Pic(X)^\vee)\oplus  
\rB^3(G,\bZ).  
$$
\end{exam}

We will need the following technical statement about generalized Bogomolov multipliers: 

\begin{lemm}
    \label{lemm:bog}
 Let $H\subset G$ be a normal subgroup with cyclic quotient $G/H=C_m$. Let $\rP_0$ be an $H$-module and $  \rP=\oplus_{j=1}^m \rP_0$ the induced $G$-module. Then the restriction homomorphism 
 $$
 \rH^2(G, \rP)\to  \rH^2(H, \rP)
 $$
is injective. In particular, we have
$$
\rB^2(H,\rP_0)=0 \quad \Rightarrow \quad \rB^2(G,\rP)=0.  
$$
\end{lemm}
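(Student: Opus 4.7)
The hypothesis identifies $\rP$ with the induced module $\mathrm{Ind}_H^G \rP_0$, and the plan is to deduce the injectivity from Shapiro's lemma, and then use it, together with an $H$-module decomposition of $\rP$, to propagate the vanishing of the generalized Bogomolov multiplier from $H$ to $G$.

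For the injectivity, I would invoke Shapiro's isomorphism $\rH^n(G, \mathrm{Ind}_H^G \rP_0) \simeq \rH^n(H, \rP_0)$, which is given explicitly as the composition
$$
\rH^n(G, \rP) \stackrel{\res}{\lra} \rH^n(H, \rP) \stackrel{\epsilon_*}{\lra} \rH^n(H, \rP_0),
$$
where $\epsilon \colon \mathrm{Ind}_H^G \rP_0 \to \rP_0$ is the $H$-equivariant counit of the induction/restriction adjunction. Since the composite is an isomorphism, the first factor $\res$ must be injective; this handles the injectivity assertion in every degree $n$, in particular for $n = 2$.

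For the Bogomolov multiplier assertion, take $\alpha \in \rB^2(G, \rP)$. Any abelian subgroup $A \subseteq H$ is abelian in $G$, so by transitivity of restriction $\res_{H \to A}(\alpha|_H) = \res_{G \to A}(\alpha) = 0$, which places $\alpha|_H$ in $\rB^2(H, \rP)$. By the injectivity just established it then suffices to show $\rB^2(H, \rP) = 0$. Fixing a lift $\sigma \in G$ of a generator of $G/H \simeq C_m$ and using the coset decomposition $G = \bigsqcup_{j=0}^{m-1} \sigma^j H$, one has
$$
\rP = \bigoplus_{j=0}^{m-1} \sigma^j \otimes \rP_0
$$
as $H$-modules, where each summand is $\rP_0$ with its $H$-action twisted by conjugation with $\sigma^{-j}$; this is well-defined because $H \triangleleft G$. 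Since both $\rH^2$ and the formation of $\rB^2$ commute with finite direct sums, it remains to observe that each twisted summand has trivial $\rB^2$, which follows from the fact that the conjugation automorphism of $H$ permutes the collection of abelian subgroups of $H$ and induces compatible isomorphisms on their $\rH^2$.

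I expect the main obstacle to be precisely this last compatibility: one must track carefully that the inner-twist isomorphism on $\rH^2(H,-)$ interacts correctly with the intersection of kernels defining $\rB^2$, so that the hypothesis $\rB^2(H,\rP_0)=0$ genuinely forces $\rB^2(H,\rP_0^{\sigma^{-j}})=0$ for each $j$. Once this bookkeeping is in hand, the chain $\alpha \in \rB^2(G, \rP) \Rightarrow \alpha|_H = 0 \Rightarrow \alpha = 0$ closes the argument.
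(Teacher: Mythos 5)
Your argument is correct. Where the paper derives the injectivity of restriction from the Hochschild--Serre spectral sequence of $1\to H\to G\to C_m\to 1$ --- showing that $\rH^2(G/H,\rP^H)$ and $\rH^1(G/H,\rH^1(H,\rP))$ both vanish because the cyclic quotient permutes the summands regularly, so the kernel of restriction is squeezed to zero --- you obtain it directly from the explicit form of Shapiro's lemma, as the first factor of the isomorphism $\epsilon_*\circ\res\colon \rH^n(G,\mathrm{Ind}_H^G\rP_0)\to\rH^n(H,\rP_0)$. Your route is shorter, works in every degree at once, and does not use that $G/H$ is cyclic (any finite-index subgroup would do); the paper's route stays inside the low-degree exact sequence it already manipulates elsewhere. (A small terminological point: the relevant $H$-equivariant map $\rP\to\rP_0$ is the counit of the restriction/coinduction adjunction, i.e.\ evaluation at $1$ under the finite-index identification $\mathrm{Ind}\simeq\mathrm{CoInd}$, equivalently the projection onto the $1\otimes\rP_0$ summand; this does not affect the argument.) On the second assertion you are in fact more careful than the paper: the restriction of $\rP$ to $H$ is $\oplus_j\,\rP_0^{\sigma^{-j}}$ with conjugation-twisted actions, and one does need your observation that the inner twist is an automorphism of $H$ (normality), permutes its abelian subgroups, and is compatible with the restriction maps defining $\rB^2$, so that $\rB^2(H,\rP_0)=0$ forces $\rB^2(H,\rP)=0$. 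The paper simply writes $\rH^2(H,\rP)=\oplus_j\rH^2(H,\rP_0)$ and leaves this implicit; the ``obstacle'' you flag is real but routine, and in the paper's actual applications $\rP_0$ is the trivial module, so the twist is invisible there.
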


\begin{proof}
The Hochschild–Serre spectral sequence yields:
    \begin{align*}
        0\to\rH^1(G/H,\rP^H)\to  \rH^1(G,\rP)\to \rH^1(H,\rP)^{G/H}\to\rH^2(G/H,\rP^H)\to \\
\to \ker\left(\rH^2(G, \rP) \to  \rH^2(H, \rP) \right) 
        \to \rH^1(G/H,\rH^1(H,\rP))   
    \end{align*}
We have
$$
\rH^2(G/H,\rP^H) = \rH^1(G/H,\rH^1(H,\rP)) =0.
$$
Indeed, $G/H$-acts via cyclic permutations on the summands of $\rP^H$ and $\rH^1(H,\rP)$; cohomology of cyclic groups acting via cyclic permutations vanishes in all degrees $\ge 1$. 
It follows that 
$$
\ker\left(\rH^2(G, \rP) \to  \rH^2(H, \rP) \right) =0.
$$

Thus, a nonzero class $\alpha\in \rH^2(G, \rP)$ remains nonzero in 
$$
\rH^2(H,\rP)=\oplus_{j=1}^m \rH^2(H,\rP_0).
$$
If $\alpha \in \rB^2(G,\rP)$, 
then the restriction of $\alpha$ to $H$ lies in $\rB^2(H, \rP)$, contradicting the assumption that $\rB^2(H, \rP_0)=0$.
\end{proof}

\subsection*{Geometric applications}

The following theorem characterizes unirationality of $G$-actions on toric varieties: 

\begin{theo}  
\label{thm:toricge}
\cite[Section 4]{HT-torsor}, \cite[Section 5]{KT-toric2}
Let $X$ be a smooth projective $T$-equivariant compactification of a torus $T$, 
with an action of a finite group $G$ arising from a homomorphism $\rho: G\to \Aut(T)$. 
Then 
$$
\beta(X,G)=0 
\Longleftrightarrow {\rm \bf (T)} \Longleftrightarrow  {\rm \bf (U)}. 
$$  
\end{theo}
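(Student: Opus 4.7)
The plan is to prove both equivalences by establishing a cycle: $\beta(X,G)=0 \Leftrightarrow (\mathbf{T})$ via the cohomological interpretation of the universal torsor, $(\mathbf{T}) \Rightarrow (\mathbf{U})$ via Cox's homogeneous coordinate ring, and $(\mathbf{U}) \Rightarrow \beta(X,G)=0$ via the functoriality of the obstruction class under equivariant morphisms.

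For $\beta(X,G)=0 \Leftrightarrow (\mathbf{T})$, set $T_{\mathrm{NS}} := \Hom(\Pic(X), \bG_m)$. The universal torsor $\cT \to X$ is a $T_{\mathrm{NS}}$-torsor classified by $\mathrm{id}_{\Pic(X)} \in \End(\Pic(X))^G$, and the obstruction to lifting the $G$-action on $X$ to a compatible $G$-action on $\cT$ is a class in $\rH^2(G, T_{\mathrm{NS}}(k(X))/T_{\mathrm{NS}}(k))$. Using \eqref{eqn:funcseq} to identify $T_{\mathrm{NS}}(k(X))/T_{\mathrm{NS}}(k) \simeq \Pic(X)^\vee \otimes \rM$ and then chasing $\mathrm{id}_{\Pic(X)}$ through the iterated connecting homomorphisms in \eqref{eqn:compute} (arising after further tensoring with \eqref{eqn:picseq}) identifies this obstruction precisely with $\beta(X,G)$; thus $\beta(X,G)=0$ is equivalent to the existence of the lift, namely $(\mathbf{T})$.

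For $(\mathbf{T}) \Rightarrow (\mathbf{U})$, I would invoke Cox's construction: $\cT$ is canonically an open subset of the affine space $\bA(V') := \Spec(k[\PL])$, where $V'$ is the $k$-vector space with basis indexed by the rays of $\Sigma$, and the complement is a union of coordinate subspaces determined by primitive collections. A lift of the $G$-action to $\cT$ extends uniquely to a linear $G$-action on $\bA(V')$, promoting $V'$ to a genuine $G$-representation. Setting $V := V' \oplus k$ with trivial action on the extra summand, the composition
$$
\bP(V) \dashrightarrow \bA(V') \supset \cT \to X
$$
is $G$-equivariant, dominant, and rational, establishing $(\mathbf{U})$.

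For $(\mathbf{U}) \Rightarrow \beta(X,G)=0$, given a $G$-equivariant dominant rational $f: \bP(V) \dashrightarrow X$, I first replace $V$ by $V \oplus k$ so that $\bP(V \oplus k)$ contains a $G$-fixed point, the map extending through the projection. Applying $G$-equivariant resolution of indeterminacy yields a smooth projective variety $Y$ with equivariant maps $Y \to \bP(V \oplus k)$ (birational) and $Y \to X$ (dominant). Since the vanishing of $\beta$ is an equivariant birational invariant of smooth projective $G$-varieties (obtained by applying \eqref{eqn:betay} in both directions along blowups with smooth centers), and since $\beta(\bP(V \oplus k), G) = 0$ by \eqref{eqn:xg}, we conclude $\beta(Y, G) = 0$; then \eqref{eqn:betay} applied to $Y \to X$ gives $\beta(X, G) = 0$.

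The most delicate step is the first: identifying the $(\mathbf{T})$-lifting obstruction with the Yoneda product $\beta(X,G)$ at the cocycle level. This requires modeling rational sections of $\cT$ over open subsets of $X$ as combined $\rM$-valued and $\PL$-valued cochain data and verifying that these assemble into the iterated connecting homomorphism of \eqref{eqn:compute}. The third step also needs the caveat that $\beta$ for the non-toric varieties $\bP(V \oplus k)$ and $Y$ is to be understood via the universal-torsor formulation, which extends verbatim to any smooth projective $G$-variety; the birational invariance and the fixed-point vanishing \eqref{eqn:xg} both persist in this more general setting.
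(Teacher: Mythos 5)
The paper offers no proof of this theorem --- it is imported verbatim from \cite[Section 4]{HT-torsor} and \cite[Section 5]{KT-toric2} --- so there is no internal argument to compare yours against; what you have written is a correct reconstruction of the standard argument from those references, with the three expected ingredients (torsor-lifting obstruction equals $\beta$, Cox-ring linearization of a lifted action, descent of vanishing along a dominant equivariant map from an equivariant resolution of $\bP(V\oplus k)$) in the right places. One point of bookkeeping should be fixed: the obstruction to lifting the $G$-action to the universal torsor lives in $\rH^2(G,T_{\mathrm{NS}}(X))=\rH^2(G,\Pic(X)^\vee\otimes k^\times)$, since the automorphisms of a $T_{\mathrm{NS}}$-torsor over the proper variety $X$ are the constant sections of $T_{\mathrm{NS}}$; the group $\rH^2(G,T_{\mathrm{NS}}(k(X))/T_{\mathrm{NS}}(k))\simeq\rH^2(G,\Pic(X)^\vee\otimes\rM)$ that you name is not where the obstruction sits --- the module $\Pic(X)^\vee\otimes\rM$ enters \eqref{eqn:compute} only in degree one, as the intermediate stage of the connecting-homomorphism chase. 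Your subsequent cocycle chase does land in the correct group, so this is an imprecision of exposition rather than a gap. Two smaller points worth making explicit in step two: the unique extension of the lifted action from $\cT$ to $\bA(V')$ uses that the irrelevant locus has codimension at least two, and linearity of the extension follows because the lift must carry $\{x_\rho=0\}\cap\cT$ to $\{x_{g\rho}=0\}\cap\cT$ while the only units on $\cT$ are constants, forcing the action on the coordinates to be by monomial matrices.
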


A related result, concerning {\em versality} of generically free actions on toric varieties is 
\cite[Theorem 3.2]{Duncaness}:
it is equivalent to the vanishing of $\beta(X,G)$, in our terminology. 
In particular, for such actions, versality is equivalent to {\em very versality} (i.e., unirationality), see
\cite{DR} for further details regarding these notions. 
Here, we allow actions with nontrivial generic stabilizers, 
i.e., when the kernel of 
$\rho$ is nontrivial. 

A consequence of \cite[Proposition 12]{HT-torsor} is:

\begin{theo}
    \label{thm:torict}
Let $X$ be a smooth projective $T$-equivariant compactification of a torus $T$, 
with a regular action of a finite group $G$ arising from an injective homomorphism $\rho: G\hookrightarrow \Aut(T)$. 
Then
$$
{\rm \bf (T)}  + {\rm \bf (SP)}\Longleftrightarrow  {\rm \bf (SL)}.
$$
\end{theo}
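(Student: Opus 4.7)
The plan is to prove the two implications separately. The forward direction $\mathbf{(SL)}\Rightarrow\mathbf{(T)}+\mathbf{(SP)}$ is formal: stable linearizability implies unirationality, so by Theorem~\ref{thm:toricge} (in particular the equivalence $\mathbf{(U)}\Leftrightarrow\mathbf{(T)}$ for toric varieties) we get $\mathbf{(T)}$. For $\mathbf{(SP)}$, note that $\Pic(\bP(V))=\bZ$ with trivial $G$-action is a permutation module; equivariant blowups and blowdowns between smooth projective $G$-varieties alter $\Pic$ only by permutation summands, so the stably permutation class of $\Pic(X)$ is an equivariant stable birational invariant, and the product $X\times\bP^m$ shifts $\Pic$ by a trivial, hence permutation, summand. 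Thus $\mathbf{(SL)}$ forces $\Pic(X)$ to be stably permutation.

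For the backward direction $\mathbf{(T)}+\mathbf{(SP)}\Rightarrow\mathbf{(SL)}$, the idea I would follow is exactly the content of \cite[Proposition 12]{HT-torsor}. Condition $\mathbf{(T)}$ provides a $G$-equivariant lift of the action to the universal torsor $\cT\to X$. For a smooth projective toric variety, $\cT$ is an open subvariety of the affine space $\Spec(\Sym(\mathrm{PL}))$ with complement of codimension $\ge 2$, and the structure group of $\cT$ is the N\'eron--Severi torus $T_{\mathrm{NS}}$ with character lattice $\Pic(X)$. In particular, $X$ is $T_{\mathrm{NS}}$-equivariantly birational to $\cT\times_{T_{\mathrm{NS}}}T_{\mathrm{NS}}$, so $X\times T_{\mathrm{NS}}$ is $G$-equivariantly birational to $\cT\times T_{\mathrm{NS}}$.

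Now $\mathbf{(SP)}$ enters: we have permutation $G$-lattices $P_1,P_2$ with $\Pic(X)\oplus P_1\simeq P_2$. After multiplying the torsor $\cT$ by the quasi-trivial torus $\Spec(k[P_1])$, we may assume that the structure torus of the enlarged torsor has permutation character lattice, i.e., it is itself a product of factors $\bG_m$ permuted by $G$. A torsor under a quasi-trivial torus is trivial up to a quasi-trivial factor (by Hilbert 90 applied summand by summand, which uses the injectivity of $\rho$ to ensure the $G$-action on the torsor is the product of the action on the base and on the fiber), hence $G$-equivariantly birational to a product of projective spaces on which $G$ acts by permuting homogeneous coordinates in blocks. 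Such a product is linearizable, and combining the intermediate birational equivalences gives a $G$-equivariant birational map $\bP(V)\dashrightarrow X\times\bP^m$, which is $\mathbf{(SL)}$.

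The main obstacle I anticipate is the bookkeeping in the second step: turning the abstract isomorphism $\Pic(X)\oplus P_1\simeq P_2$ into a concrete enlargement of the universal torsor and carefully matching the $G$-actions on both sides. The injectivity of $\rho$ is essential here to guarantee that the stabilization of $X$ by a quasi-trivial torus indeed produces a $G$-equivariant toric variety to which the ``permutation Picard'' case applies; without injectivity, the kernel of $\rho$ could act nontrivially on the fiber and spoil the Hilbert~90 reduction. These details are carried out in \cite[Proposition 12]{HT-torsor}, which I would cite rather than reproduce.
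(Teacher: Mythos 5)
Your argument is correct and matches the paper, which offers no independent proof of this statement but simply derives it as a consequence of \cite[Proposition 12]{HT-torsor} — exactly the result you invoke for the backward direction. Your sketch of the universal-torsor/quasi-trivial-torus mechanism behind that proposition, together with the formal forward direction via Theorem~\ref{thm:toricge} and the stable birational invariance of the stably-permutation property of $\Pic$, is the intended reasoning.
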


Note that 
$$
{\bf (A)} \not \Rightarrow {\bf (U)},
$$
even for generically free actions on toric threefolds, see Section~\ref{sect:3-folds}. In fact, not even for $X=\bP^1$, if we allow generic stabilizers! 

\begin{exam}
\label{exam:p1}
Consider $X=\bP^1$. If $G\subset \Aut(T)$, i.e., 
$$
G\subset \bG_m(k)\rtimes C_2,
$$
then $G$ is either cyclic or dihedral. Actions of cyclic groups and of dihedral groups 
$\fD_n$ of order $2n$, with $n$ odd,  are unirational; actions of $\fD_n$, with $n$ even, are not unirational, since they contain the subgroup $C_2^2$ which has no fixed points on $\bP^1$, i.e., failing Condition {\bf (A)}. In particular, for generically free actions we have
$\mathrm{\bf{(A)}}\Leftrightarrow \mathrm{\bf{(U)}}$.

For non-generically free $G$-actions on $\bP^1$, considered in \cite[Example 2.2]{KT-toric2}, Condition {\bf  (A)} does not suffice to characterize unirationality. By \eqref{eqn:bogoA}, such groups must have a nontrivial 
Bogomolov multiplier 
$$
\rB^2(G,k^\times)\simeq \rB^3(G,\bZ).
$$
For example, let $G$ be the group of order 64, with {\tt GAP ID (64,149)}; 
this is the smallest group with nontrivial $\rB^2(G,k^\times)$. 
There is a unique subgroup $H\simeq C_2\times \fQ_8\subset G$, with {\tt GAP ID (16,12)}, and  $G/H\simeq C_2^2$. 
Consider a homomorphism $\rho:G\to\PGL_2(k)$ with kernel $H$ and image $C_2^2$. 
The resulting $G$-action on $\bP^1$ is not generically free, but satisfies Condition {\bf(A)} -- no abelian subgroup of $G$ surjects onto $C_2^2$, via $\rho$. We compute as in \eqref{eqn:compute} that
$$
0\ne\beta(\bP^1,G)\in\rB^2(G,k^\times)\subset \rH^2(G,k^\times)=\rH^2(G,\Pic(\bP^1)^\vee\otimes k^\times). 
$$
Another way to see this is to observe the equality of commutator subgroups
$$
[G,G]=[G,H],
$$
which is impossible if the extension of $G$ by $k^\times$ associated with the class $\beta(\bP^1,G)$ splits, see \cite[Example 2.2]{KT-toric2}. Thus, {\bf (U)} fails for this action.
\end{exam}

\subsection*{Group cohomology}

The computation of the obstruction class $\beta(X,G)$ 
relies on an explicit resolution of the group ring. We write down such resolutions for groups that will be relevant for the analysis of 3-dimensional toric varieties in Section~\ref{sect:3-folds}. By convention, we work with right $G$-modules, i.e., the group $G$ acts from the right.

$\bullet$ Let $G=\fQ_{2^n}$ be the generalized quaternion group of order $2^n$, with a presentation
$$
\fQ_{2^n}:=\langle x,y\vert x^{2^{n-2}}=y^2,\,\, xyx=y\rangle,
$$
and $\rP$ a $G$-module. By \cite[\S XII.7]{cartaneilenberg}, the cohomology groups $\rH^i(G,\rP)$, $i=0,1,2,3$, can be computed as the $i$-th cohomology of the complex

\begin{equation}\label{eqn:resolutionQ8}
 \rP \stackrel{\tiny\begin{pmatrix}
  1-x&  1-y \end{pmatrix}}{\xrightarrow{\hspace*{2cm}} }
  \rP^2 \stackrel{\tiny\begin{pmatrix}
  N_x&  {yx}+1\\
  -1-y&x-1
  \end{pmatrix}}{\xrightarrow{\hspace*{2.3cm}}}
\rP^2\stackrel{\tiny\begin{pmatrix}
 1-x \\yx-1 \end{pmatrix}}{\xrightarrow{\hspace*{0.8cm}}}
\rP \stackrel{\sum_{g\in G}g}{\xrightarrow{\hspace*{1cm}}} \rP
\cdots
\end{equation}
where 
$$N_x=1+x+x^2+\cdots+x^{(2^{n-2}-1)}. 
$$

\

$\bullet$ Let $G=\fD_{2^{n-1}}$ be the dihedral group of order $2^n$, with a presentation 
$$
\fD_{2^{n-1}}:=\langle x,y\vert x^{2^{n-1}}=y^2=yxyx=1\rangle,
$$
and $\rP$ a $G$-module. By \cite[\S IV.2]{ademmilgram}, the cohomology groups $\rH^i(\fD_n,\rP)$, $i=0,1,2$, can be computed as the $i$-th cohomology of

\begin{equation}\label{eqn:resolutiondihedral}
 \rP \stackrel{\tiny\begin{pmatrix}
  1-x& \!\!\! 1-y \end{pmatrix}}{\xrightarrow{\hspace*{1.5cm}} }
  \rP^2 \stackrel{\tiny\begin{pmatrix}
  N_x& \!\! 1+{yx}&\!\!0\\
  0&\!\!x-1&\!\!1+y
  \end{pmatrix}}{\xrightarrow{\hspace*{1.8cm}}}
\rP^3\stackrel{\tiny\begin{pmatrix}
  1-x& \!\! 1+y&\!\!0&\!\!0\\ 
  0& \!\! -N_{x}&\!\!1-{yx}&\!\!0\\
   0& \!\! 0&\!\!1-x&\!\!1-y   \end{pmatrix}}{\xrightarrow{\hspace*{3.2cm}}}
\rP^4 
\cdots
\end{equation}
where
$$
N_x=1+x+x^2+\cdots+x^{(2^{n-1}-1)}.
$$

$\bullet$ Let $G=\mathfrak{SD}_{2^n}$ be the semidihedral group of order $2^n$, with presentation 
$$
\mathfrak{SD}_{2^n}:=\langle x,y\vert x^{2^{n-1}}=y^2=1,\,\, yxy=x^{2^{n-2}-1}\rangle,
$$
and $\rP$ a $G$-module. Using the resolution constructed in \cite{generalov}, we can compute $\rH^i(G,\rP)$, $i=0,1,2$,  as the $i$-th cohomology of 

\begin{equation}\label{eqn:resolutionsemidihedral}
 \rP \stackrel{\tiny\begin{pmatrix}
  1-x& 1-y
  \end{pmatrix}}{\xrightarrow{\hspace*{2cm}}}
  \rP^2\stackrel{\tiny\begin{pmatrix}
  L_2& 0\\
  L_1& 1+y
  \end{pmatrix}}{\xrightarrow{\hspace*{2.3cm}}}
\rP^2\stackrel{\tiny\begin{pmatrix}
  -L_3& 0\\
  L_4&1-y
  \end{pmatrix}}{\xrightarrow{\hspace*{2.3cm}}}
\rP^2 
\cdots
\end{equation}
where 
 $$
 L_1=x^{2^{n-3}+1}-1,\quad
 L_2=\sum_{r=0}^{2^{n-3}}x^r-\left(\sum_{r=0}^{2^{n-3}-2}x^r\right)\cdot y,
 $$
 $$
 L_3=(x^{2^{n-3}-1}-1)(1+y),\quad L_4=(x^{2^{n-3}+1}-1)(x^{2^{n-3}-1}-1).
 $$

\section{Toric surfaces}
\label{sect:surf}

In this section, we recall the classification of unirational and linearizable actions of subgroups $G\subset \Aut(T)$ on smooth projective toric surfaces $X\supset T$. 
The maximal finite subgroups of $\GL_2(\bZ)$ are
$$
\fD_4 \quad\text{and}\quad \fD_6.
$$
The recipe of Section~\ref{sect:gene} shows that all actions as above can be realized as regular actions on $X=\bP^1\times \bP^1$ and respectively, $X=\mathrm{dP}_6$, the del Pezzo surface of degree 6.

\begin{prop} 
\label{prop:dp6}
    Let $X$ be a smooth projective toric surface with an action of a finite group $G\subset \Aut(T)$. Then 
    $$
{\bf (SL)}\Longleftrightarrow {\bf (U)} \Longleftrightarrow {\bf (A)}.
    $$
\end{prop}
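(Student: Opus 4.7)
The implications ${\bf (SL)} \Rightarrow {\bf (U)} \Rightarrow {\bf (A)}$ are general and already recorded in the introduction; only ${\bf (A)} \Rightarrow {\bf (SL)}$ requires proof. Since $G \hookrightarrow \Aut(T)$, Theorems~\ref{thm:toricge} and~\ref{thm:torict} combine to reduce this to showing that ${\bf (A)}$ implies both $\beta(X,G) = 0$ (equivalently ${\bf (U)}$) and ${\bf (SP)}$.

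My plan is to handle these two statements by a case analysis on $\bar G := \pi^*(G) \subset \GL(\rN) = \GL_2(\bZ)$. Every finite subgroup of $\GL_2(\bZ)$ is conjugate into one of the maximal subgroups $\fD_4$ or $\fD_6$, producing a short list of possible $\bar G$; the minimal smooth projective $\bar G$-equivariant toric compactifications of $T$ are then among $\bP^2$, $\bP^1 \times \bP^1$, the relevant Hirzebruch surfaces, and $\mathrm{dP}_6$. Since ${\bf (U)}$ and ${\bf (SL)}$ are $G$-equivariant birational invariants, the claim may be checked on such representatives.

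For ${\bf (SP)}$, one verifies directly from \eqref{eqn:picseq} that $\Pic(X)$ is stably permutation in each case, exploiting that $\mathrm{PL}$ is a permutation module on the rays of $\Sigma$ and that for the surfaces on the above list the quotient $\Pic(X)$ can be presented as a permutation $G$-module after adding suitable permutation summands. For the vanishing of $\beta(X,G)$, Condition ${\bf (A)}$ places the obstruction in $\rB^3(G, \Pic(X)^\vee)$ by \eqref{eqn:bogoA}; Lemma~\ref{lemm:beta} reduces the computation to Sylow subgroups, and \eqref{eqn:compute} applied to \eqref{eqn:funcseq} and \eqref{eqn:picseq} renders the cohomology explicit. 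Since $\Pic(X)$ has small rank, the resulting groups can be evaluated by hand.

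The main obstacle is the contribution of the translation subgroup $G_T := T(k) \cap G$: although $G_T$ acts trivially on $\Pic(X)$ (because $T$ is connected), the Hochschild--Serre spectral sequence for $1 \to G_T \to G \to \bar G \to 1$ can in principle introduce new cohomology classes, so the full group $G$ --- not only $\bar G$ --- must be tracked throughout. Lemma~\ref{lemm:bog} provides a prototype for controlling this, and in the dimension-$2$ setting one can iterate such arguments to conclude that no new obstruction appears under ${\bf (A)}$. Much of this analysis already appears in \cite{Duncan, HT-torsor}, to which the introduction attributes the $n=2$ case; the proof ultimately consolidates those results.
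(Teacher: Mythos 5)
Your argument is essentially the paper's: the equivalence ${\bf (U)}\Leftrightarrow{\bf (A)}$ is taken from \cite{Duncan}, and ${\bf (A)}\Rightarrow{\bf (SL)}$ is obtained from Theorem~\ref{thm:torict} once ${\bf (SP)}$ is checked on the maximal models $\bP^1\times\bP^1$ and $\mathrm{dP}_6$ (where the paper cites \cite[Section 6]{HT-torsor} for the latter rather than verifying it by hand, as you propose). Your supplementary plan to reprove ${\bf (A)}\Rightarrow\beta(X,G)=0$ by explicit Sylow and Hochschild--Serre computations tracking $G_T$ is only sketched and would take genuine work to execute, but since you ultimately anchor that step in the same references, the proof goes through along the same lines.
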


\begin{proof}
The right equivalence is proved in \cite{Duncan}. The left equivalence follows from the general result
Theorem~\ref{thm:torict}, i.e., \cite[Proposition 12]{HT-torsor}: if the generically free $G$-action satisfies {\bf (T)} and $\Pic(X)$ is a stably permutation $G$-module, then the action is stably linearizable.
The stable permutation property of the
$G$-action on $\Pic(X)$ is clear for $X=\bP^1\times \bP^1$; for $X=\mathrm{dP}6$, see \cite[Section 6]{HT-torsor}.   
\end{proof}

It will be convenient to choose coordinates $\{t_1,t_2\}$ of $T=\bG_m^2$. This determines a basis $\{m_1,m_2\}$ of the lattice $\rM$ and $\{n_1,n_2\}$ of its dual $\rN$. Assume that in the basis $\{n_1,n_2\}$, the $\fD_4$-action is generated by the involutions
$$
\iota_1:={\tiny\begin{pmatrix}
         -1&0\\0&-1
     \end{pmatrix},}\quad\iota_2:={\tiny\begin{pmatrix}
         -1&0\\0&1
     \end{pmatrix},}\quad \iota_3:={\tiny\begin{pmatrix}
         0&1\\1&0
     \end{pmatrix}.}
     $$
Consider the  involutions in $T(k)\cap G$, in the coordinates $\{t_1,t_2\}$,
$$
\tau_1:=(-1,-1), \quad \tau_2:=(-1,1), \quad \tau_3:=(1,-1).
$$
A classification of $G$ yielding versal (and thus unirational) actions can be found in \cite[Section 4.1]{Duncaness}. Note that only 2 and 3-groups are relevant. 
\begin{prop}
    Let $G\subset \Aut(T)$ be a finite subgroup acting on a smooth projective toric surface $X\supset T$. Then \begin{itemize}
        \item if $G$ is a 3-group, then the $G$-action is unirational if and only if $\pi^*(G)=1$ or $G_T=1$.  
         \item if $G$ is a 2-group, then the $G$-action is unirational if and only if one of the following holds, up to conjugation,
         \begin{itemize}
          \item $\pi^*(G)=1$ or $\langle\iota_3\rangle$,
             \item $\pi^*(G)=\langle\iota_2\rangle$ and $G_T\subset \{(1,t):t\in\bG_m(k)\}\subset T(k)$, 
             \item $G_T=1$ otherwise.
         \end{itemize}
    \end{itemize}
Moreover, for actions of $p$-groups, unirationality implies linearizability.     
\end{prop}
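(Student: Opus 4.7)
The plan is to combine Theorem~\ref{thm:toricge} and Theorem~\ref{thm:torict} with an explicit cohomological case analysis. Since $G\subset\Aut(T)$ acts faithfully on $T$, hence generically freely on $X$, Theorem~\ref{thm:toricge} identifies {\bf (U)} with the vanishing of $\beta(X,G)$, and Theorem~\ref{thm:torict} further reduces {\bf (SL)} to {\bf (U)} together with {\bf (SP)}. The analysis is organized by the conjugacy class of $\pi^*(G)\subset\GL(\rN)=\GL_2(\bZ)$: since $G$ is a $p$-group, $\pi^*(G)$ is a $p$-subgroup of one of the maximal finite subgroups $\fD_4$ or $\fD_6$, and for $p=3$ the only nontrivial option is $C_3$.

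Several cases dispose of themselves quickly. If $\pi^*(G)=1$, then $G=G_T\subset T(k)$ is abelian, every abelian subgroup of $G$ fixes each $T$-fixed point of $X$, and \eqref{eqn:xg} gives $\beta(X,G)=0$. If $\pi^*(G)=\langle\iota_3\rangle$ acting on $X=\bP^1\times\bP^1$, the two boundary divisor classes in $\Pic(X)$ are freely permuted by $\iota_3$, so $\Pic(X)$ is an induced $G$-module; Shapiro's lemma identifies $\beta(X,G)$ with a class in $\rH^2(G_T,k^\times)$ that measures the obstruction to linearizing the $G_T$-action on a single $\bP^1$-factor, which vanishes because $G_T\subset T(k)$ acts linearly on each factor. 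Finally, if $G_T=1$, the action is generically free and the classification of versal actions in \cite[Section~4.1]{Duncaness} reproduces the list in the statement.

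The main obstacle is the intermediate cases where both $G_T$ and $\pi^*(G)$ are nontrivial. I would compute $\beta(X,G)$ through \eqref{eqn:compute}, using the cyclic or dihedral resolution \eqref{eqn:resolutiondihedral}. For a cyclic subgroup $\langle\sigma\rangle\subset\pi^*(G)$, the restriction of $\beta$ admits a Tate-cohomology interpretation: it vanishes iff the extension class of \eqref{eqn:exte} lies in the image of the norm map $N_\sigma\colon T(k)\to T(k)^\sigma$. Direct computation yields $\image(N_{\iota_2})=\{1\}\times\bG_m(k)$, giving the condition $G_T\subset\{(1,t):t\in\bG_m(k)\}$ for $\pi^*(G)=\langle\iota_2\rangle$; $\image(N_{\iota_1})=\{1\}$, forcing $G_T=1$ whenever $\pi^*(G)$ contains the central involution $\iota_1$; and $\image(N_\sigma)=\{1\}$ for an order-$3$ rotation $\sigma$ on $\bG_m^2$, forcing $G_T=1$ in the 3-group case. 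Since every non-cyclic 2-subgroup of $\fD_4$ up to conjugation contains $\iota_1$, this handles the remaining 2-group cases as well.

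The ``moreover'' clause follows by verifying {\bf (SP)} in each unirational case and invoking Theorem~\ref{thm:torict}. On $X=\bP^1\times\bP^1$, $\Pic(X)\cong\bZ^2$ is a permutation $\fD_4$-module (with $\iota_3$ swapping the two generators while $\iota_1$ and $\iota_2$ act trivially), hence a permutation module for every subgroup. On $X=\mathrm{dP}_6$, the stably permutation structure of $\Pic(X)$ as a $\fD_6$-module is recorded in \cite[Section~6]{HT-torsor}. Combining these with the unirationality classification above yields {\bf (SL)} in every unirational $p$-group case.
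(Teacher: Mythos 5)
The paper states this proposition without proof: it is offered as a summary of the classification in \cite{Duncaness}, with Proposition~\ref{prop:dp6} (the equivalence of {\bf (U)} and {\bf (A)} for toric surfaces) doing the real work. So there is no argument in the paper to compare yours against; judged on its own, your proof has a genuine gap in the central step. Your criterion in the third paragraph is not well posed as written — the extension class of \eqref{eqn:exte} lives in $\rH^2(\bar G,G_T)$, not in $T(k)$, so ``lies in the image of $N_\sigma$'' must be replaced by a statement about the translation part of a chosen lift of $\sigma$, and the asserted equivalence with the vanishing of $\mathrm{res}\,\beta$ is never proved. More seriously, the criterion only tests $\beta(X,G)$ on preimages of \emph{cyclic} subgroups of $\pi^*(G)$, and vanishing of all such restrictions does not imply $\beta(X,G)=0$: that failure is exactly what the generalized Bogomolov multiplier $\rB^2$ of Section~\ref{sect:gene} measures, and you never show it vanishes for the groups at hand.

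This is not a removable technicality. Take $g_1\colon(t_1,t_2)\mapsto(t_1^{-1},t_2^{-1})$ and $g_2\colon(t_1,t_2)\mapsto(-t_1^{-1},t_2)$. Then $G=\langle g_1,g_2\rangle\simeq C_2^2$ with $G_T=1$ and $\pi^*(G)=\langle\iota_1,\iota_2\rangle$. Every cyclic subgroup of $G$ has fixed points on $\bP^1\times\bP^1$ (so all your cyclic restrictions of $\beta$ vanish), yet the image of $G$ in $\PGL_2$ acting on the first factor is the Klein four-group, hence $X^G=\emptyset$, Condition {\bf (A)} fails, and the action is not unirational by Proposition~\ref{prop:dp6}. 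Your sentence ``since every non-cyclic $2$-subgroup of $\fD_4$ contains $\iota_1$, this handles the remaining $2$-group cases'' is therefore where the argument breaks: for non-cyclic $\pi^*(G)$ unirationality depends on the choice of lift of $\pi^*(G)$ to $G$, not only on the pair $(\pi^*(G),G_T)$, and delegating the sufficiency of ``$G_T=1$'' to \cite{Duncaness} does not close this (indeed the example shows the third bullet needs a condition on the lift, i.e., Condition {\bf (A)}, built in). The robust route is the one the paper sets up: use Proposition~\ref{prop:dp6} to replace {\bf (U)} by Condition {\bf (A)} and translate {\bf (A)} into group theory case by case; your norm-map computations then reappear, correctly, as fixed-point conditions for lifts of each $\sigma$, together with a separate check for the non-cyclic images. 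The first two bullet cases and the ``moreover'' clause in your write-up are fine.
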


Here, we  recall the classification on linearizable $G$-actions on toric surfaces, with $G\subset \Aut(T)$, following \cite{DI} and \cite{PSY}. Consider the case $\pi^*(G)\subseteq \fD_4$: 
\begin{itemize}
    \item If $\rk\Pic(X)^G=1$ then the $G$-action is linearizable if and only if $\pi^*(G)$ is conjugate to $\langle \iota_3\rangle \simeq C_2$ in $\GL_3(\bZ)$.
    \item If $\rk\Pic(X)^G=2$ then the $G$-action is linearizable if and only if up to conjugation one of the following holds:
    \begin{itemize}
        \item $\pi^*(G)=\langle \iota_2\rangle$ and  $$
    G_T\subset\{(t_1,t_2):t_1,t_2\in\bG_m(k), \quad\mathrm{ord}(t_1) \text{ is odd}\}.
    $$
    \item $\pi^*(G)=\langle \iota_1\rangle$ or $\langle \iota_1,\iota_2\rangle$ and    $|G_T|$ is odd.
      \end{itemize}
\end{itemize}

We turn to $\pi^*(G)\subseteq \fD_6$:
\begin{itemize}
\item If $\rk\Pic(X)^G=1$ then the $G$-action is linearizable if and only if  $G_T=1$ and $G\simeq C_6$ or $G\simeq\fS_3$.
\item If $\rk\Pic(X)^G=2$ and  $\pi^*({G})=C_3$ or $\fS_3$, then the action is linearizable if and only if $3\nmid |G_T|$; all other possibilities for $\pi^*(G)$ are realized as subgroups of $\fD_4$, covered above. 
\end{itemize}

\section{Toric threefolds: smooth projective models} 
\label{sect:3-folds}

We start with the classification of actions and their realizations on smooth projective toric threefolds, following \cite{kunyavskii}. Let $G\subset \Aut(T)$ be a finite subgroup where $T=\bG_m^3$. Recall from the exact sequence \eqref{eqn:exte} that $\bar G=\pi(G)$ is a subgroup of $\GL_3(\bZ)$.
There are two isomorphism classes
of maximal finite subgroups of $\GL_3(\bZ)$: 
$$
C_2\times\fS_4 \quad \text{ and } \quad C_2\times \fD_6. 
$$
The first group gives {\em three} conjugacy classes in $\GL_3(\bZ)$, 
referred to as Case {\bf (C)}, {\bf (S)}, and {\bf (P)}, respectively. 
The other group gives one conjugacy class, called Case {\bf (F)}.

\subsection*{Case (C)} 
Here, $X=(\bP^1)^3$, with $\bar{G}\subset C_2\times \fS_4$ and the action visible from the presentation  
$$
1\to C_2^3\to C_2\times \fS_4\to \fS_3\to 1,
$$
with $\fS_3$ permuting the factors and $C_2$ acting as an involution on the corresponding $\bP^1$.

\

\subsection*{Case (F)} 
In this case, $X=\bP^1\times \mathrm{dP6}$, and $\bar{G}\subset C_2\times \fD_6$, with 
$C_2$ acting via the standard involution on $\bP^1$, and $\fD_6$ acting on dP6 as described in Section~\ref{sect:surf}.

\

\subsection*{Case (P)} In this case, $X$ is the blowup of 
$$
\{u_1u_2u_3u_4=v_1v_2v_3v_4\}\subset\bP^1_{u_1,v_1}\times\bP^1_{u_2,v_2}\times\bP^1_{u_3,v_3}\times\bP^1_{u_4,v_4}
$$
in its 6 singular points, and $\bar G\subset C_2\times\fS_4$. The corresponding $\pi^*(G)$-invariant fan $\Sigma$ consists of 99 cones: 32 three-dimensional cones, 48 two-dimensional cones, 18 rays, and the origin. We have 
$$
\Pic(X)=\bZ^{15}.
$$

\

\subsection*{Case (S)} Here,  $X$ is the blowup of 
$\bP^3$ in 4 points and the 6 lines through these points, with $\bar{G}=C_2\times \fS_4$, 
acting via permutations on the 4 points and 6 lines, with $C_2$ corresponding to the Cremona involution on $\bP^3$, which is regular on $X$. A singular model is the intersection of two quadrics
$$
\{y_1y_4-y_2y_5=y_1y_4-y_3y_6 \} \subset \bP^5_{y_1,y_2,y_3,y_4,y_5}.
$$
Blowing up its 6 singular points one obtains $X$, see \cite[Section 9]{HT-Q} for an extensive discussion of this geometry. The corresponding fan $\Sigma$ consists of 75 cones: 24 three-dimensional cones, 36 two-dimensional cones, 14 rays, and the origin. We have 
$$
\Pic(X)=\bZ^{11}.
$$

\

By Lemma~\ref{lemm:beta}, to establish property {\bf (U)} it suffices to consider $p$-Sylow subgroups of $G$, 
in our case, $p=3$ or $2$.

\subsection*{Models for 3-groups}

The are two finite 3-subgroups of $\GL_3(\bZ)$, both isomorphic to $C_3$. They are generated by 
$$
{\tiny \begin{pmatrix}
 0&1&0\\
 0&0&1\\
 1&0&0
\end{pmatrix},}
\quad \text{respectively,}\quad
{\tiny \begin{pmatrix}
 1&0&0\\
 0&-1&-1\\
 0&1&0
\end{pmatrix}.}
$$
The corresponding model $X$ can be chosen to be $\bP^3$ and $\bP^1\times \bP^2$ respectively.

\subsection*{Models for 2-groups}

There are three conjugacy classes of $C_2\times \fS_4$ in $\GL_3(\bZ)$, but only {\em two} conjugacy classes of their 2-Sylow subgroups,
generated respectively by 
$$
\left\langle{\tiny \begin{pmatrix}
            0&0&1\\
            0&1&0\\
        -1&0&0
        \end{pmatrix},\begin{pmatrix}
            -1&0&0\\
            0&-1&0\\
        0&0&-1
        \end{pmatrix},\begin{pmatrix}
            -1&0&0\\
            0&1&0\\
        0&0&1

        \end{pmatrix}}\right\rangle\simeq C_2\times \fD_4,
$$
and
$$
\left\langle{\tiny \begin{pmatrix}
            0&0&1\\
            -1&-1&-1\\
        1&0&0
        \end{pmatrix},\begin{pmatrix}
            -1&0&0\\
            0&-1&0\\
        0&0&-1
        \end{pmatrix},\begin{pmatrix}
            1&0&0\\
            0&1&0\\
        -1&-1&-1
        \end{pmatrix}}\right\rangle\simeq C_2\times \fD_4.
$$
The first group is realized on $X=(\bP^1)^3$, and the other on either {\bf (S)} or {\bf (P)} model. 

In the analysis below, we need a simpler smooth projective model when $\pi^*(G)$ is contained in the $\fD_4\subset\GL_3(\bZ)$ generated by  
$$
{\tiny\tau_1:=\begin{pmatrix}
            1&0&0\\
            -1&-1&-1\\
            0&0&1
        \end{pmatrix},
        \quad
        \tau_2:=\begin{pmatrix}
            1&1&1\\
            -1&0&0\\
            0&-1&0     
    \end{pmatrix}.}
    $$
Let $\Sigma$ be the fan in $\rN$ generated by 6 rays with generators
$$
\begin{tabular}{lll}
    $v_1=(-1,0,-1)$,&$v_2=( 0, -1,  0)$,&
    $v_3=( 0,  0,  1)$,\\
    $v_4=( 1,  0,  0)$,
    &$v_5=( 1,  1,  1)$,
    &$v_6=( 1,  0,  1)$.
\end{tabular}
$$
and 8 cones 
$$
S_1=  \langle v_1, v_4, v_5 \rangle,
S_2= \langle v_1, v_3, v_5 \rangle,
S_3=\langle v_1, v_2, v_3 \rangle,
S_4=\langle v_1, v_2, v_4 \rangle,
$$
$$
S_5=\langle v_4, v_5, v_6 \rangle,
S_6=\langle v_3, v_5, v_6 \rangle,
S_7=\langle v_2, v_3, v_6 \rangle,
S_8=\langle v_2, v_4, v_6 \rangle.
$$
Then $\Sigma$ is $\pi^*(G)$-invariant and the toric variety $X=X(\Sigma)$ is the blowup of a cone over a smooth quadric surface at its vertex.

\section{Toric threefolds: unirationality}
\label{sect:uni}

Let $T=\bG_m^3$ and $G\subset \Aut(T)$ be a finite group, acting on a smooth projective $X$, which is
a $G$- and $T$-equivariant compactification of $T$. 
We recall the exact sequence
$$
1\to G_T\to G\stackrel{\pi}{\lra} \bar{G}\to 1.
$$
In this section, we classify unirational $G$-actions, in particular, these satisfy Condition {\bf (A)}.

\begin{prop}\label{prop:dim33group}
Let $G\subset \Aut(T)$ be 3-group such that the $G$-action on $X$ satisfies Condition {\bf (A)}. Then 
it satisfies {\bf (U)}.
\end{prop}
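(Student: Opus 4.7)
The plan is to prove $\beta(X,G)=0$, which by Theorem~\ref{thm:toricge} is equivalent to {\bf (U)}. Since vanishing of $\beta$ is a stable birational invariant and any two smooth projective $G$- and $T$-equivariant toric compactifications of $T$ are $G$-birational (they share the open $G$-stable subset $T$), I am free to replace $X$ by any convenient model. By the classification in Section~\ref{sect:3-folds}, the image $\bar G := \pi^*(G)\subset\GL_3(\bZ)$ of a finite $3$-subgroup is either trivial or one of the two conjugacy classes of $C_3$.

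If $\bar G = 1$, then $G = G_T\subset T(k)$ is abelian and {\bf (A)} applied to $G$ itself gives $X^G\neq\emptyset$, so $\beta(X,G)=0$ by \eqref{eqn:xg}. If $\bar G$ is the cyclic permutation of the basis of $\rN$, take $X=\bP^3$: the torus-fixed point $P=[0{:}0{:}0{:}1]$, which corresponds to the $\bar G$-invariant maximal cone $\langle n_1,n_2,n_3\rangle$, is fixed by $G_T$ (which acts by scaling the first three homogeneous coordinates) and by any lift of $\bar G$ (which permutes them cyclically). Hence $P\in X^G$.

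The remaining, main case is $\bar G = C_3$ fixing one basis vector, corresponding to the second matrix listed in Section~\ref{sect:3-folds}. Take $X=\bP^1\times\bP^2$ with $T = T_1\times T_2 = \bG_m\times\bG_m^2$, so $\bar G$ acts trivially on $T_1$ and permutes the three rays of a suitable $\bP^2$-fan in the $T_2$-lattice cyclically. Let $G_i\subset\Aut(T_i)$ denote the image of $G$. The kernel $K$ of $G\twoheadrightarrow G_2$ equals $G\cap T_1(k)$ and is \emph{central} in $G$: elements of $T_1(k)$ commute with $T(k)$-translations and are fixed by $\bar G$, since $\bar G$ fixes $T_1$ pointwise. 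Consequently every abelian subgroup $B\subset G_2$ lifts to an abelian subgroup $A\subset G$ (the full preimage, which is a central extension of $B$ by $K$, hence abelian). Condition {\bf (A)} for $G$ on $X$ gives
\[
X^A = (\bP^1)^{A_1}\times(\bP^2)^{A_2}\neq\emptyset,
\]
and since $A_2 = A/K = B$, we obtain $(\bP^2)^B\neq\emptyset$. Hence {\bf (A)} holds for $G_2$ on $\bP^2$, so by Proposition~\ref{prop:dp6} the $G_2$-action on $\bP^2$ is {\bf (U)}. The action of $G_1\subset \bG_m$ on $\bP^1$ fixes $\{0,\infty\}$ and is trivially {\bf (U)}. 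Choosing $G_i$-equivariant dominant maps $\bP(V_i)\dashrightarrow \bP^i$, the composition
\[
\bP(V_1\oplus V_2)\dashrightarrow \bP(V_1)\times\bP(V_2)\dashrightarrow \bP^1\times\bP^2
\]
is $G$-equivariant and dominant, establishing {\bf (U)} for $X$.

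The main obstacle is this third case: unlike the first two, $\bP^1\times\bP^2$ generally admits \emph{no} $G$-fixed point, because the three $\bar G$-fixed points on $\bP^2$ are not torus-fixed and are therefore not preserved by a nontrivial $G_T$. The product-of-unirational-maps argument replaces the direct fixed-point argument, and its key input is the centrality of $K = G\cap T_1(k)$ in $G$, which permits lifting abelian subgroups from $G_2$ to $G$ and thereby transferring Condition {\bf (A)} from $G$ on $X$ to $G_2$ on $\bP^2$.
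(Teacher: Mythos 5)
Your argument reaches the right conclusions and, in the main case $X=\bP^1\times\bP^2$, takes a genuinely different route from the paper. The paper works cohomologically: the class $\beta(X,G)$ decomposes as the pair of Amitsur obstructions of the two factors inside $\rH^2(G,k^\times)\oplus\rH^2(G,k^\times)$; Condition {\bf (A)} forces $\beta(X,G)$ into the generalized Bogomolov multiplier, and $\rB^2(G,k^\times)=0$ because $G$ is an extension of the cyclic group $C_3$ by the abelian group $G_T$. You instead transfer Condition {\bf (A)} to the image $G_2$ acting on the $\bP^2$-factor, invoke the surface result (Proposition~\ref{prop:dp6}), and assemble a dominant $G$-map from $\bP(V_1\oplus V_2)$. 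Both routes are legitimate; the paper's is shorter because the Bogomolov-multiplier vanishing is a quotable one-liner, while yours makes the reduction to dimension $2$ explicit and needs no computation in $\rH^2(G,k^\times)$. Your treatment of the other two cases (a $G$-fixed torus-fixed point on $\bP^3$, resp.\ a $G$-fixed point when $\pi^*(G)=1$) is correct and consistent with the paper.

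One justification, however, is wrong as stated: a central extension of an abelian group by an abelian group need not be abelian --- the Heisenberg group of order $27$ is a central extension of $C_3^2$ by a central $C_3$. So ``the full preimage, which is a central extension of $B$ by $K$, hence abelian'' is a non sequitur, and this is exactly the step your whole reduction hinges on. The claim is nevertheless true in your situation, for a reason you did not state: since $\pi^*(G)$ acts trivially on the sublattice $\rN_1$ corresponding to $T_1$, the group $G$ embeds into the direct product $T_1(k)\times \Aut(T_2)$, with first projection $G_1\subset T_1(k)$ abelian. The preimage $A$ of an abelian $B\subset G_2$ is therefore contained in $G_1\times B$, which is abelian. (Equivalently: the $T_1$-component of any commutator in $G$ is a commutator in the abelian group $T_1(k)$, so $[G,G]\subset T_2(k)$ and hence $[A,A]\subset K\cap T_2(k)=1$.) With that one-line repair your proof is complete.
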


\begin{proof} 
As explained above, the model $X$ can be chosen to be either $\bP^3$ or $\bP^1\times \bP^2$.
In the first case, the action is linear. 
When $X=\bP^1\times \bP^2$, the $G$-action on $\Pic(X)$ is trivial
and 
$$
\beta(X,G)\in \rH^2(G,k^\times )\oplus \rH^2(G,k^\times );
$$
by \cite[Remark 5.5]{KT-toric2}, obstruction to $\beta(X,G)=0$ equals the Amitsur obstruction for each factor. We have an extension
$$
1\to G_T\to G\to C_3\to 1,
$$
with $G_T$ abelian; the Bogomolov multiplier $\rB^2(G,k^\times)=0$, by, e.g., \cite[Lemma 3.1]{KT-Brauer}.  Condition {\bf (A)} implies that the $G$-action lifts
to a linear action on each factor. 
\end{proof}

\begin{prop}\label{prop:dim32group}
Let $G\subset \Aut(T)$ be 2-group such that the $G$-action on $X$ satisfies Condition {\bf (A)}. Then 
it satisfies {\bf (U)} if and only if one of the following holds:
\begin{itemize}
    \item $G_T=1$, or
    \item $G_T\neq 1$ and $\pi^*(G)$ is not conjugated to 
\begin{equation} 
\label{eqn:C22}
 \fK_9=
 {\tiny\left\langle\begin{pmatrix}  0 &1 &-1\\
    1 &0 &-1\\
     0 & 0 & -1
   \end{pmatrix}, \begin{pmatrix}
       -1&  0 &0\\
    -1 &0 &1\\
 -1 & 1 & 0
\end{pmatrix}\right\rangle}\simeq C_2^2.
\end{equation}
\end{itemize}
\end{prop}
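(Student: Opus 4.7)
\textbf{Proof proposal for Proposition \ref{prop:dim32group}.}
The plan is to invoke Theorem~\ref{thm:toricge} and reduce the whole statement to the question of whether $\beta(X,G)\in \rH^3(G,\Pic(X)^\vee)$ vanishes. So the ``only if'' direction becomes: exhibit a nonzero obstruction when $G_T\neq 1$ and $\pi^*(G)\sim \fK_9$; and the ``if'' direction becomes: verify $\beta(X,G)=0$ under the stated hypotheses, in particular under Condition~\textbf{(A)} (which by \eqref{eqn:bogoA} forces $\beta(X,G)\in\rB^3(G,\Pic(X)^\vee)$, i.e.\ into the locus we must kill).

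The ``only if'' half is the hard one; I would simply appeal to the dedicated treatment in Section~\ref{sect:exept}, since the paper explicitly carves this case out as exceptional. For the ``if'' half, I would enumerate the finitely many conjugacy classes of 2-subgroups $\bar G=\pi^*(G)\subset\GL_3(\bZ)$ (they lie in one of the two 2-Sylow conjugacy classes $C_2\times\fD_4$ listed in Section~\ref{sect:3-folds}, or in the $C_2^3$ inside $C_2\times\fD_6$), and for each one proceed in two steps: (i) pick the most convenient $\bar G$-invariant smooth projective toric model from the list in Section~\ref{sect:3-folds} (e.g.\ $(\bP^1)^3$, Models \textbf{(S)} or \textbf{(P)}, or the blowup of the cone over a quadric when $\bar G$ sits in the explicit $\fD_4$ described at the end of Section~\ref{sect:3-folds}); (ii) decompose $\Pic(X)$ as a $\bar G$-module into permutation/monomial summands, and compute with the resolutions \eqref{eqn:resolutionQ8}, \eqref{eqn:resolutiondihedral}, \eqref{eqn:resolutionsemidihedral}.

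For the subcase $G_T=1$, the sequence \eqref{eqn:exte} is trivial, so $G\cong\bar G$, and for each model under consideration $\Pic(X)$ is in fact a stably permutation $\bar G$-module (for $(\bP^1)^3$ this is evident; for the \textbf{(S)}, \textbf{(P)} and the cone-blowup models this is a short check using the $\pi^*(G)$-orbits on the rays of $\Sigma$). Then Theorem~\ref{thm:torict}, together with Condition~\textbf{(A)} (to dispense with the torsor condition~\textbf{(T)}), delivers \textbf{(SL)} and hence \textbf{(U)}. For the subcase $G_T\neq 1$ with $\pi^*(G)\not\sim\fK_9$, the approach is to choose a normal subgroup $H\lhd G$ so that $G/H$ is cyclic and $\Pic(X)^\vee$ restricts on $H$ to an induced module from a proper subgroup, and then apply Lemma~\ref{lemm:bog} to collapse the Bogomolov-type part of $\rH^2$; combined with Lemma~\ref{lemm:beta} this lets me reduce to the $p=2$ Sylow and to a handful of small extensions of $\bar G$ by $G_T$, whose obstruction class is directly computable via \eqref{eqn:compute}. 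Condition~\textbf{(A)} is used crucially each time to guarantee that restrictions of $\beta(X,G)$ to all abelian subgroups vanish, so any nonvanishing must come from $\rB^3$.

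The main obstacle I expect is the combinatorial bookkeeping: there are several conjugacy classes of $\bar G$, two ambient 2-Sylow types, and for each class the extension $1\to G_T\to G\to\bar G\to 1$ must be analyzed up to the action of $\bar G$ on $G_T\subset T(k)$. Keeping this finite but nontrivial catalogue uniform is the real work. The genuinely delicate piece, showing that $\fK_9$ with $G_T\neq 1$ forces $\beta(X,G)\neq 0$ despite \textbf{(A)}, is precisely what makes the exceptional group \eqref{eqn:C22} appear in Theorem~\ref{thm:main}, and that is why it is deferred to the dedicated Section~\ref{sect:exept} rather than handled in line.
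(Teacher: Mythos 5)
Your overall architecture matches the paper's: reduce everything to the vanishing of $\beta(X,G)$ via Theorem~\ref{thm:toricge}, defer the exceptional case $\fK_9$ to Section~\ref{sect:exept}, and run a case analysis over conjugacy classes of $\pi^*(G)$. However, two steps in your ``if'' direction have genuine gaps. First, for $G_T=1$ you invoke Theorem~\ref{thm:torict} ``together with Condition \textbf{(A)} (to dispense with the torsor condition \textbf{(T)})''. Condition \textbf{(A)} does not imply \textbf{(T)}; that implication is precisely what has to be proved, and the whole point of the proposition is that it can fail. For abelian $G$ it follows from the existence of a fixed point via \eqref{eqn:xg}, but for $G\simeq\fD_4$ or $C_2\times\fD_4$ Condition \textbf{(A)} only places $\beta(X,G)$ in the generalized Bogomolov multiplier $\rB^2(G,\Pic(X)^\vee\otimes k^\times)$, and one must still prove this group vanishes for the relevant models \textbf{(C)} and \textbf{(P)} --- the paper does this by an explicit (machine) computation in Lemma~\ref{lemm:indeed1}. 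As written, your argument for this subcase is circular.

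Second, for $G_T\neq1$ the catalogue of extensions $1\to G_T\to G\to\bar G\to1$ is not finite: $G_T$ is an arbitrary finite $2$-subgroup of $T(k)$, so reducing to ``a handful of small extensions whose obstruction class is directly computable'' cannot exhaust the cases. The paper instead argues structurally: it exhibits explicit abelian subgroups without fixed points to rule out Condition \textbf{(A)} (the $\fK_6$, $\fK_7$, $\theta_2$, $\theta_3$ analyses), finds index-two abelian subgroups to force $\rB^2(G,k^\times)=0$, or produces outright fixed points. Moreover, for the $\fD_4$ class treated in Lemma~\ref{lemm:d4}, Lemma~\ref{lemm:bog} only controls the induced summand $\rP$ of $\Pic(X)=\bZ\oplus\rP$; the conclusion there is reached not by a further cohomology computation but by a geometric argument (projection to the boundary divisor $D_\sigma$ followed by the secant-line construction on the quadric cone, as in \cite[Proposition 3.1]{CTZ-uni}). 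Your purely cohomological plan omits both of these ingredients and would need to supply replacements for them.
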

\begin{proof}
    The assertion follows from Lemmas~\ref{lemm:indeed1},~\ref{lemm:exclude},~\ref{lemm:indeed},~\ref{lemm:indeed4},~\ref{lemm:c22}, and~\ref{lemm:d4sum}.
\end{proof}

The rest of this section is devoted to the proof of this proposition. There are 2 conjugacy classes of maximal 2-groups in $\GL_3(\bZ)$, both isomorphic to 
$$
C_2\times \fD_4.
$$
The corresponding toric models are {\bf (C)}, and {\bf (S)} or {\bf (P)}. We proceed with a case-by-case analysis of actions;  altogether, we have to consider 36 conjugacy classes of finite subgroups $\pi^*(G)\subset\GL(\rN)$. We summarize:

 \

\begin{itemize}
    \item When $G_T=1$: {\bf (U)} holds, by Lemma~\ref{lemm:indeed1}. 
\item When $G_T\neq 1$ and $\pi^*(G)$ isomorphic to
\begin{itemize}
    \item $C_2, C_4$: {\bf (U)} holds, by Lemmas~\ref{lemm:indeed} and \ref{lemm:indeed4}. 
    \item $C_2^2$: see Lemma~\ref{lemm:c22}. 
    \item $\fD_4$: see Lemma~\ref{lemm:d4}. 
    \item $C_2^3$,  $C_2\times C_4$, $C_2\times\fD_4$: all such actions fail Condition {\bf (A)}, by Lemma~\ref{lemm:exclude}.  
\end{itemize}

\end{itemize}

\begin{lemm}
\label{lemm:indeed1}
Assume that $G\subset \Aut(T)$ is a 2-group with $G_T=1$.  
Then 
$$
\text{{\bf(A)}} \Longleftrightarrow \text{{\bf(U)}}. 
$$
\end{lemm}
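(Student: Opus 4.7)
The direction $(\mathbf{U}) \Rightarrow (\mathbf{A})$ is the general implication recorded in Section~\ref{sec.intro}. For $(\mathbf{A}) \Rightarrow (\mathbf{U})$, the plan is to show something stronger: under the sole hypothesis $G_T = 1$ the action is already $(\mathbf{U})$, so that Condition $(\mathbf{A})$ is automatic and, in fact, the 2-group hypothesis is superfluous.

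By Theorem~\ref{thm:toricge}, it is enough to prove $\beta(X,G) = 0$. Recall that $\beta(X,G)$ is defined as the Yoneda product of
$$
1 \to k^\times \to k(T)^\times \to \rM \to 0
\qquad \text{and} \qquad
0 \to \rM \to \mathrm{PL} \to \Pic(X) \to 0.
$$
The key observation is that the first extension splits $G$-equivariantly whenever $G_T = 1$. Indeed, in that case the projection $\pi: G \to \GL(\rM)$ is injective, and the induced $G$-action on $T = \Spec k[\rM]$ is linear and fixes $1 \in T$. The map
$$
\rM \longrightarrow k(T)^\times, \qquad m \longmapsto \chi^m,
$$
is therefore a $G$-equivariant section of $k(T)^\times \to \rM$: with the right-action convention $m \mapsto m \cdot g$ of the paper, one has $\chi^m \cdot g = \chi^{m \cdot g}$ by the very definition of how $\GL(\rM)$ acts on the coordinate ring $k[T]$. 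Since a Yoneda product in which one factor is trivial is itself trivial, this forces $\beta(X,G) = 0$, and Theorem~\ref{thm:toricge} then gives $(\mathbf{U})$.

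The argument is cohomologically very short, and the only step deserving attention is the bookkeeping of the right-action convention that yields $\chi^m \cdot g = \chi^{m \cdot g}$; this is essentially definitional. In particular, no Sylow-by-Sylow case analysis and no use of the 2-group hypothesis or of Condition $(\mathbf{A})$ enter the proof. The genuine obstacles appear only when $G_T \ne 1$, which are handled by the subsequent lemmas invoked in Proposition~\ref{prop:dim32group}.
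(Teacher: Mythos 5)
Your proof has a genuine gap, and the stronger statement you aim for ($G_T=1\Rightarrow{\bf (U)}$ unconditionally) is false. The error is the assertion that $G_T=1$ forces the $G$-action to fix $1\in T$, so that $m\mapsto\chi^m$ is an equivariant splitting of $1\to k^\times\to k[T]^\times\to\rM\to 0$. The condition $G_T=1$ only says that $G$ meets the translation subgroup $T(k)\subset\Aut(T)$ trivially; it makes $G$ a complement to $T(k)$ over $\bar G$ inside $\pi^{-1}(\bar G)$, but such complements are classified up to translation-conjugacy by $\rH^1(\bar G, T(k))$ and need not be conjugate to the standard copy of $\bar G$ stabilizing $1\in T$. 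Writing $g\in G$ as $t\mapsto c_g\cdot\phi_g(t)$ with $c_g\in T(k)$, one has $\chi^m\cdot g=\chi^m(c_g)\,\chi^{m\cdot g}$, and the scalar $\chi^m(c_g)$ is exactly what your bookkeeping discards; the section is equivariant iff the cocycle $(c_g)$ is a coboundary, i.e.\ iff $T^G\ne\emptyset$. This is the ``torsor'' phenomenon the paper emphasizes in Section~\ref{sect:gene} and again immediately after the lemma (``$X$ may fail to have $G$-fixed points even when $G_T=1$''). A concrete counterexample to your conclusion: let $G=\langle g_1,g_2\rangle\subset\Aut(\bG_m^3)$ with $g_1\colon (t_1,t_2,t_3)\mapsto(t_1^{-1},t_2^{-1},t_3)$ and $g_2\colon (t_1,t_2,t_3)\mapsto(-t_1^{-1},t_2,t_3^{-1})$. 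Then $G\simeq C_2^2$, $G_T=1$, but in the first coordinate $g_1$ forces $t_1^2=1$ while $g_2$ forces $t_1^2=-1$ (and swaps $0,\infty$), so $G$ is an abelian group with no fixed point on $(\bP^1)^3$; Condition {\bf (A)}, and hence {\bf (U)}, fails. Note also that the lemma is an equivalence, so both sides are allowed to fail; your argument would erase the forward implication.

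The paper's actual argument cannot avoid using {\bf (A)}. Since $G\simeq\bar G$ is a subgroup of a $2$-Sylow $C_2\times\fD_4$ of $\GL_3(\bZ)$, either $G$ is abelian --- and then {\bf (A)} gives $X^G\ne\emptyset$, whence $\beta(X,G)=0$ by \eqref{eqn:xg} --- or $G\simeq\fD_4$ or $C_2\times\fD_4$, in which case one computes (via {\tt HAP}, on the models {\bf (C)} or {\bf (P)}) that the generalized Bogomolov multiplier $\rB^2(G,\Pic(X)^\vee\otimes k^\times)$ vanishes; Condition {\bf (A)} places $\beta(X,G)$ in that group by \eqref{eqn:bogoA}, so $\beta(X,G)=0$ and Theorem~\ref{thm:toricge} applies. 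If you want to salvage your approach, you would first have to prove $T^G\ne\emptyset$ (not just $G_T=1$), which is a strictly stronger hypothesis and already yields the conclusion by \eqref{eqn:xg} without any discussion of the splitting.
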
    

\begin{proof}
When $G$ is abelian, the claim follows from \eqref{eqn:xg}.
It remains to consider the cases when $G=\fD_4$ or $C_2\times \fD_4$. 
Via {\tt HAP}, we have computed that for corresponding models $X$, i.e.,  {\bf (C)} or {\bf (P)}, 
the generalized Bogomolov multiplier satisfies
$$
\rB^2(G,\Pic(X)^\vee\otimes k^\times)=0.
$$
 Condition {\bf (A)} implies that 
$$
\beta(X,G)\in \rB^2(G,\Pic(X)^\vee\otimes k^\times),
$$
and thus $\beta(X,G)=0$; it remains to apply
Theorem~\ref{thm:toricge}.
\end{proof}

Note that $X$ may fail to have $G$-fixed points even when $G_T=1$, see \cite[Remark 5.2]{CTZ-burk}. 

\

From now on,   we assume that
\begin{itemize}
    \item 
$G\subset \Aut(T)$ is a 2-group and
\item $G_T\neq 1$.
\end{itemize}

\begin{lemm} 
\label{lemm:exclude}
Assume that $\pi^*(G)$ 
contains
$$
\eta:=\mathrm{diag}(-1,-1,-1)\in\GL(\rN).
$$
Then the $G$-action fails Condition {\bf(A)}. 
\end{lemm}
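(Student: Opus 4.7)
The plan is to exhibit an abelian subgroup $A \simeq C_2^2$ of $G$ with $X^A = \emptyset$, thereby violating Condition \textbf{(A)}. The two generators will be a lift of $\eta$ and a $2$-torsion element of $G_T$.

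First, I would fix a lift $\tilde\eta \in G$ of $\eta$. Since $\eta = -\mathrm{id}$ on $\rN$ corresponds to the inversion $t \mapsto t^{-1}$ on $T$, any such $\tilde\eta$ acts as $t \mapsto s \cdot t^{-1}$ for some $s \in T(k)$, from which one computes $\tilde\eta^2 = \mathrm{id}_T$. Because $G_T$ is a nontrivial finite $2$-subgroup of $T(k) \simeq (k^\times)^3$, it contains a nontrivial $2$-torsion element $\tau$. Conjugation in $\Aut(T)$ gives $\tilde\eta \tau \tilde\eta^{-1} = \eta(\tau) = \tau^{-1} = \tau$, so $A := \langle \tilde\eta, \tau \rangle \simeq C_2^2$ is abelian, with $\tilde\eta \ne \tau$ since $\pi(\tilde\eta)=\eta \ne 1$ while $\pi(\tau)=1$.

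It then remains to verify that $X^A = \emptyset$. The automorphism $\tilde\eta$ permutes the $T$-orbits of $X$ via $O_\sigma \mapsto O_{\eta(\sigma)}$, and the condition $\eta(\sigma) = -\sigma = \sigma$ forces $\sigma = \{0\}$ by strong convexity of the cones in $\Sigma$. Hence only the open orbit is $\tilde\eta$-stable, giving $X^{\tilde\eta} = T^{\tilde\eta}$; solving $s \cdot t^{-1} = t$ in $T(k)$ identifies this set with a torsor under the $2$-torsion subgroup $T[2] \simeq C_2^3$, in particular with $8$ isolated points. On the other hand, $\tau$ acts on $T$ by the translation $t \mapsto \tau \cdot t$, which is fixed-point-free for $\tau \neq 1$; a fortiori $\tau$ acts freely on $X^{\tilde\eta}$, so $X^A = (X^{\tilde\eta})^\tau = \emptyset$. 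The main insight is the toric observation that $\eta = -\mathrm{id}$ fixes no non-trivial cone of $\Sigma$, confining all $\tilde\eta$-fixed points to the open torus where every nontrivial element of $G_T$ acts freely; no serious obstacle is anticipated.
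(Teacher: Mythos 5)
Your proof is correct and follows essentially the same strategy as the paper's (one-line) argument: pair a lift $\tilde\eta$ of $\eta$ with a nontrivial $2$-torsion translation $\tau\in G_T$ (which exists by the standing assumptions that $G$ is a $2$-group and $G_T\neq 1$) to obtain an abelian $C_2^2$ with empty fixed locus. The only cosmetic difference is that you verify fixed-point-freeness directly on the given model via the fan --- using strong convexity to confine $X^{\tilde\eta}$ to the open torus, where $\tau$ acts freely --- whereas the paper checks it on the explicit model $(\bP^1)^3$ and implicitly invokes the birational invariance of Condition \textbf{(A)}.
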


\begin{proof} 
Indeed, $\eta$ can be realized as the diagonal involution on $(\bP^1)^3$, and  any translation by a 2-group will produce a $C_2^2$ action without fixed points. 
\end{proof}

\ 

After excluding groups containing $\eta$, it suffices to consider
$$
\pi^*(G)=
C_2, \quad C_4, \quad C_2^2, \quad \fD_4.  
$$

\begin{lemm}
\label{lemm:indeed}
Assume that $\pi^*(G)=C_2$. 
Then 
$$
\text{{\bf(A)}} \Longleftrightarrow \text{{\bf(U)}}. 
$$
\end{lemm}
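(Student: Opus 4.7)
The implication \textbf{(U)}$\Rightarrow$\textbf{(A)} is immediate: any abelian subgroup $A\subseteq G$ fixes a point on $\bP(V)$, whose image under a $G$-equivariant dominant rational map yields $X^A\neq \emptyset$. The focus is therefore on \textbf{(A)}$\Rightarrow$\textbf{(U)}. By Theorem~\ref{thm:toricge}, it suffices to show $\beta(X,G)=0$, and by \eqref{eqn:bogoA} together with the isomorphism $\rH^3(G,\Pic(X)^\vee)\simeq \rH^2(G,\Pic(X)^\vee\otimes k^\times)$ recalled in Section~\ref{sect:gene}, Condition \textbf{(A)} places $\beta(X,G)$ in the generalized Bogomolov multiplier $\rB^2(G,\Pic(X)^\vee\otimes k^\times)$. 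The plan is thus to verify that this multiplier vanishes, for every $G$ with $\pi^*(G)=C_2$.

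The first step is to enumerate the conjugacy classes of $\pi^*(G)=C_2\subset \GL_3(\bZ)$ not containing $\eta=\mathrm{diag}(-1,-1,-1)$. Writing the $\bQ$-representation as $N_\bQ=N^+\oplus N^-$ and noting that $\rN$ is either equal to $N^+\oplus N^-$ or is an index-$2$ over-lattice, one obtains four such classes: two ``split'' and two ``non-split'', indexed by $\rk N^-\in\{1,2\}$.

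In the two split cases, $T$ decomposes as $T_1\times T_2$ with $\pi^*(G)$ trivial on one factor and acting by inversion on the other. Taking $X=X_1\times X_2$ to be a product of smooth projective toric compactifications of $T_1,T_2$, the $G$-module $\Pic(X)^\vee\otimes k^\times$ splits correspondingly, and by Remark~5.5 of \cite{KT-toric2} the class $\beta(X,G)$ decomposes as a sum of contributions from each factor. Each factor is a toric variety of dimension at most $2$ on which the induced $G$-action inherits Condition \textbf{(A)} via the projection; the lower-dimensional equivalences (Example~\ref{exam:p1} for $\bP^1$ and Proposition~\ref{prop:dp6} for surfaces) force each contribution to vanish.

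In the two non-split cases, I produce an explicit smooth projective toric model $X$ from a $\pi^*(G)$-invariant fan in $\rN$ and compute $\Pic(X)$ as a $G$-module. Since $G_T$ acts trivially on $\Pic(X)^\vee$ and on $k^\times$, the module $\Pic(X)^\vee\otimes k^\times$ factors through $\bar G=C_2$. Applying Lemma~\ref{lemm:bog} to the $C_2$-permutation summands of $\Pic(X)^\vee$, combined with the Hochschild--Serre spectral sequence for $1\to G_T\to G\to C_2\to 1$, the vanishing of $\rB^2(G,\Pic(X)^\vee\otimes k^\times)$ reduces to that of $\rB^2(G_T,-)$, which is automatic since $G_T$ is abelian. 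The main obstacle is the non-split analysis, where the explicit $C_2$-module decomposition of $\Pic(X)^\vee$ must be determined and the remaining trivial or sign summands handled separately; following the strategy of Lemma~\ref{lemm:indeed1}, this is expected to be verified by a direct {\tt HAP} computation on the few surviving $G$-module structures.
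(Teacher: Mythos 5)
Your overall strategy---realize the action on a product of toric varieties of dimension at most two, decompose $\beta(X,G)$ factor-wise, and kill each summand---is essentially the paper's, and your enumeration of the four conjugacy classes of $C_2\subset\GL_3(\bZ)$ not containing $\eta$ is correct. But the key step in your ``split'' paragraph has a gap. The induced $G$-action on a factor is in general \emph{not} faithful: its kernel is the part of $G_T$ acting trivially on that factor. Example~\ref{exam:p1} and Proposition~\ref{prop:dp6} give the equivalence {\bf (A)} $\Leftrightarrow$ {\bf (U)} only for generically free actions (respectively, for $G\subset\Aut(T)$ acting faithfully on a surface); indeed, the second half of Example~\ref{exam:p1} is precisely a counterexample to this equivalence for non-generically-free actions on $\bP^1$. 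Nor does Condition {\bf (A)} for $G$ on $X$ pass to the faithful quotient acting on a factor, since an abelian subgroup of $G/\ker$ need not lift to an abelian subgroup of $G$. So ``the lower-dimensional equivalences force each contribution to vanish'' is not justified as written.

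The missing ingredient, which is exactly what the paper supplies, is that $G$ is an extension of $C_2=\pi^*(G)$ by the abelian group $G_T$, hence $\rB^2(G,k^\times)=0$. Condition {\bf (A)} places the Amitsur class of each $\bP^1$- or $\bP^2$-factor in $\rB^2(G,k^\times)$, so each class vanishes and {\bf (U)} follows from Theorem~\ref{thm:toricge} together with \cite[Remark 5.2]{KT-toric2}. You do invoke this Bogomolov-multiplier mechanism in your ``non-split'' branch, but there you defer the conclusion to an unperformed {\tt HAP} computation. Note also that the non-split lattices $\bZ[C_2]\oplus\bZ_{\pm}$ still yield product models ($\bP^2\times\bP^1$ or $(\bP^1)^2\times\bP^1$), so the split/non-split dichotomy is unnecessary: the Bogomolov argument closes all four cases uniformly, with no further computation needed.
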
    

\begin{proof}  
Apart from $\langle\eta\rangle$, there are 4 conjugacy classes of groups of order 2 in $\GL_3(\bZ)$, generated by 
$$
{\tiny
\iota_1\!=\!\begin{pmatrix}
    1&0&0\\
    0&-1&0\\
    0&0&-1
\end{pmatrix},\,\,\iota_2\!=\!\begin{pmatrix}
    1&0&0\\
    0&1&0\\
    0&0&-1
\end{pmatrix},\,\,\iota_3\!=\!\begin{pmatrix}
    0&1&0\\
    1&0&0\\
    0&0&1
\end{pmatrix},\,\, \iota_4\!=\!\begin{pmatrix}
    0&1&0\\
    1&0&0\\
    0&0&-1
\end{pmatrix}.
}
$$
The first case is realized on $(\bP^1)^3$ and the last three cases in $\bP^2\times \bP^1$. 
 By \cite[Remark 5.2]{KT-toric2}, unirationality is determined by unirationality of all of the $\bP^1$ and $\bP^2$ factors, which is equivalent to triviality of the Amitsur invariant, see Example~\ref{exam:p1}.  

Since $G$ is an extension of the cyclic group $C_2$ by an abelian group $G_T$, the Bogomolov multiplier $\rB^2(G,k^\times)=0$. Together with Condition {\bf (A)} this implies that the Amitsur invariant for the action on each factor is trivial, and the $G$-action on $X$ is unirational. 
\end{proof}

\begin{rema}
    \label{rema:alt}
Alternatively, one can check that for $\pi^*(G)=C_2$,  
the $G$-action satisfies Condition {\bf (A)} if and only if one of the following holds:
\begin{itemize}
    \item $\pi^*(G)=\langle\iota_1\rangle$, and $G_T\subset \{(t,1,1):t\in\bG_m(k)\}
\subset T(k)$.
    \item  $\pi^*(G)=\langle\iota_2\rangle$, and $G_T\subset \{(t_1,t_2,1):t_1,t_2\in\bG_m(k)\}
\subset T(k)$.
     \item  $\pi^*(G)=\langle\iota_3\rangle$, and $G_T$ is any subgroup of $T(k)$.
    \item  $\pi^*(G)=\langle\iota_4\rangle$, and $(t,t,-1)\not\in G_T$ for any $t\in \bG_m(k)$.
\end{itemize}
Using this description, we see that Condition {\bf (A)} is also equivalent to  $X^G\ne \emptyset$, in the first three cases.
\end{rema}

\begin{lemm} 
\label{lemm:indeed4}
Assume that $\pi^*(G)=C_4$. 
Then 
$$
{\bf (A)} \Longleftrightarrow  {\bf (U)}. 
$$
\end{lemm}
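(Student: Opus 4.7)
The necessity of (A) for (U) is general. For the converse, I would mimic the proof of Lemma~\ref{lemm:indeed}. First, observe that any order-4 element of $\GL_3(\bZ)$ has characteristic polynomial $(x^2+1)(x-\varepsilon)$ with $\varepsilon=\pm 1$, hence eigenvalues $\{i,-i,\varepsilon\}$; its square has eigenvalues $\{-1,-1,1\}$, distinct from those of $\eta=\mathrm{diag}(-1,-1,-1)$. Consequently $\eta\notin\pi^*(G)$ automatically, and Lemma~\ref{lemm:exclude} does not apply, so a direct analysis is required. I would enumerate the $\GL_3(\bZ)$-conjugacy classes of cyclic subgroups of order 4, a short computation using the integral representation theory of $C_4$.

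For each conjugacy class, I would exhibit a smooth projective $\pi^*(G)$-equivariant toric model $X$. When the cocharacter lattice decomposes as $\rN = \rN_1 \oplus \rN_2$ with $\rN_1$ of rank 1 and $\rN_2$ of rank 2 carrying the faithful $C_4$-representation, one obtains a product model $X \simeq \bP^1 \times X_1$. In such cases, \cite[Remark 5.5]{KT-toric2} splits $\beta(X,G)$ into an Amitsur-type class in $\rH^2(G,k^\times)$ for the $\bP^1$ factor plus a contribution $\beta(X_1,G)$; the latter vanishes by the toric surface analysis of Section~\ref{sect:surf} and Proposition~\ref{prop:dp6} applied to the induced $G$-action on $X_1$, since Condition (A) is inherited by restriction. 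For the remaining conjugacy classes, where $\rN$ does not split integrally, a direct cohomology computation on an appropriate smooth projective model (such as the blowup model introduced in Section~\ref{sect:3-folds}) would establish the vanishing of the generalized Bogomolov multiplier $\rB^2(G,\Pic(X)^\vee\otimes k^\times)$, via a resolution analogous to those displayed in Section~\ref{sect:gene}.

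Finally, $G$ fits into an extension
\begin{equation*}
1 \to G_T \to G \to C_4 \to 1
\end{equation*}
with $G_T$ abelian, so $G$ is metabelian and its Bogomolov multiplier $\rB^2(G,k^\times)$ vanishes by \cite[Lemma 3.1]{KT-Brauer}. Combined with Condition (A), this forces each Amitsur summand to be zero, so $\beta(X,G)=0$; unirationality then follows from Theorem~\ref{thm:toricge}. The main obstacle will be the handling of those $C_4$-classes for which $\rN$ does not split integrally as a direct sum of the one- and two-dimensional components: there the product decomposition is unavailable, and one must verify vanishing of $\rB^2(G,\Pic(X)^\vee\otimes k^\times)$ by an explicit resolution-based computation, ensuring compatibility with the lift $G_T \subset T(k)$ of the translation part.
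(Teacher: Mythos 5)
There is a genuine gap, concentrated in the fourth conjugacy class of $C_4$ in $\GL_3(\bZ)$ (the generator $\theta_4=\tau_2$ of the paper, whose integral representation does \emph{not} split off a rank-one summand and which is realized only on the {\bf (S)} or {\bf (P)} models). For this class you write that ``a direct cohomology computation on an appropriate smooth projective model would establish the vanishing of the generalized Bogomolov multiplier $\rB^2(G,\Pic(X)^\vee\otimes k^\times)$.'' This is not a proof, and it cannot be turned into one by a finite computation: $G$ is an extension of $C_4$ by an arbitrary finite abelian $2$-group $G_T\subset T(k)$, so there are infinitely many groups $G$ to handle, and the nearby case $\fK_9$ (Section~\ref{sect:exept}) shows that on these very models the class $\beta(X,G)$ can be nonzero even under Condition {\bf (A)} --- so vanishing of the relevant Bogomolov-type group is exactly what is at stake and cannot be assumed. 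The paper disposes of this class by a genuinely geometric argument (Lemma~\ref{lemm:d4}): one passes to a $G$-invariant toric boundary divisor $D_\sigma\simeq\bP^1\times\bP^1$ of the blown-up quadric cone, proves unirationality there using the $\fK_7$ analysis of Lemma~\ref{lemm:c22} together with the injectivity of restriction on induced modules (Lemma~\ref{lemm:bog}), and then propagates unirationality to the threefold by the ``second intersection point with a line'' construction of \cite{CTZ-uni}. None of this is present in your outline, and it is the substantive content of the lemma.

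Two smaller points. First, your reduction of the split cases to Proposition~\ref{prop:dp6} via ``Condition {\bf (A)} is inherited by restriction'' is shaky: the induced $G$-action on the surface factor need not be faithful, and Example~\ref{exam:p1} shows that for non-generically-free actions {\bf (A)} does not imply {\bf (U)}; moreover for $\theta_1,\theta_2$ the surface factor is $Q=\bP^1\times\bP^1$ with the two rulings swapped, so the corresponding piece of $\beta(X,G)$ lives in $\rH^2(G,\bZ^2\otimes k^\times)$ with $\bZ^2$ an induced module, not in a sum of Amitsur groups $\rH^2(G,k^\times)$; your final step therefore does not apply as stated (it can be repaired using Lemma~\ref{lemm:bog}, but you do not do this). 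Second, you miss that for $\theta_2$ and $\theta_3$ Condition {\bf (A)} simply fails whenever $G_T\neq 1$ (the paper exhibits explicit abelian subgroups without fixed points), which is how those classes are actually dispatched; the paper's treatment of $\theta_1$ is likewise much simpler than yours, since {\bf (A)} forces $G_T\subset\{(t,1,1)\}$ and then $X^G\neq\emptyset$, so $\beta(X,G)=0$ by \eqref{eqn:xg} with no cohomology needed.
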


\begin{proof}
There are 4 conjugacy classes of $C_4\subset \GL_3(\bZ)$, generated by 
$$
{\tiny \theta_1\!=\!\begin{pmatrix}
    1&0&0\\
    0&0&1\\
    0&-1&0
\end{pmatrix},\theta_2\!=\!\begin{pmatrix}
    -1&0&0\\
    0&0&1\\
    0&-1&0
\end{pmatrix},
\theta_3\!=\!\begin{pmatrix}
    -1&-1&-1\\
    1&0&0\\
    0&1&0
\end{pmatrix},
\theta_4\!=\!\begin{pmatrix}
    1&1&1\\
    -1&0&0\\
    0&-1&0
\end{pmatrix}}.
$$
The first two cases are realized on $\bP^1\times Q$, where $Q=\bP^1\times\bP^1$. The third case on $\bP^3$. The fourth can be realized on either the {\bf (P)} or {\bf (S)} model.

\ 

\noindent
{\bf Case $\theta_1$}: Note that $\theta_1^2=\iota_1$. Condition {\bf (A)} implies that
    $$
    G_T\subset \{(t,1,1):t\in\bG_m(k)\}\subset T(k)
    $$
    and $G$ fixes a point on $X=\bP^1\times Q$; therefore, the $G$-action satisfies {\bf (U)}. 

\

\noindent 
{\bf Case $\theta_2$}: We also have $\theta_2^2=\iota_1$. Condition {\bf (A)} implies that $G_T$ contains
    $$
   \iota=(-1,1,1)\in T(k).
    $$
  However, for $g\in G$ such that $\pi^*(g)=\theta_2$, the abelian subgroup $\langle g,\iota\rangle$ of $G$ does not fix points on $X$, contradiction. 

 \

\noindent
{\bf Case $\theta_3$}: 
Let $g\in G$ be  such that $\pi^*(g)=\theta_3$. Up to conjugation by an element in $T(k)$, we may assume that $g$ acts on $\bP^3_{y_1,y_2,y_3,y_4}$ via 
$$
(y_1,y_2,y_3,y_4)\mapsto(y_4,y_1,y_2,y_3).
$$
One can check that for any 2-torsion element $\iota\in T(k)$,  the group $\langle g,\iota\rangle$ contains an abelian subgroup with no fixed point on $\bP^3$, contradiction. 

\ 

\noindent 
{\bf Case $\theta_4$}:  This element is contained in a $\fD_4$, covered in Lemma~\ref{lemm:d4}. 
\end{proof}

\begin{lemm} 
\label{lemm:c22}
Assume that $\pi^*(G)=C_2^2$. Then 
$$
{\bf (A)} \Longleftrightarrow {\bf (U)}, 
$$
unless $\pi^*(G)$ is conjugate to the group indicated in \eqref{eqn:C22}. 
\end{lemm}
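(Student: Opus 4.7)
The plan is to enumerate, up to conjugation, the subgroups $\pi^*(G) \simeq C_2^2$ in $\GL_3(\bZ)$ not containing $\eta = -\mathrm{id}$ (excluded by Lemma~\ref{lemm:exclude}), and for each representative other than $\fK_9$ exhibit a smooth projective $G$-equivariant toric model $X$ on which Condition~\textbf{(A)} forces $\beta(X,G) = 0$. By Theorem~\ref{thm:toricge} this yields~\textbf{(U)}. The enumeration separates into two kinds of classes, depending on whether $\pi^*(G)$ preserves a rank-$1$ direct summand of $\rN$.

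\textbf{Reducible classes.} When such an invariant summand exists, $X$ can be chosen to split as $\bP^1 \times Y$ with $Y$ one of the two-dimensional toric models of Section~\ref{sect:surf}. By \cite[Remark 5.5]{KT-toric2} the obstruction decomposes as $\beta(X,G) = \beta_1 + \beta_2$, with $\beta_1$ the Amitsur class on the $\bP^1$-factor and $\beta_2$ the class attached to $Y$; Condition~\textbf{(A)} pushes both into the corresponding generalized Bogomolov groups. For $\beta_2$ the equivalence \textbf{(A)}$\Leftrightarrow$\textbf{(U)} is Proposition~\ref{prop:dp6}. For $\beta_1$ one picks a normal index-$2$ subgroup $H \subset G$ with $G/H \simeq C_2$ cyclic and with $\pi^*(H)$ of order at most $2$; Lemma~\ref{lemm:indeed} disposes of $H$, and Lemma~\ref{lemm:bog} lifts the vanishing of the relevant Bogomolov group from $H$ to $G$, so that \eqref{eqn:bogoA} forces $\beta_1 = 0$.

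\textbf{Irreducible non-exceptional classes.} For $\pi^*(G)$ with no invariant rank-$1$ summand but contained in the $\fD_4 \subset \GL_3(\bZ)$ introduced at the end of Section~\ref{sect:3-folds}, I would use the quadric-cone blowup $X(\Sigma)$ constructed there; its Picard lattice is small enough that $\rB^2(G, \Pic(X)^\vee \otimes k^\times) = 0$ can be verified directly from the resolutions of Section~\ref{sect:gene}, or reduced via Lemma~\ref{lemm:bog} to the already-handled case $\pi^*(G) = C_2$ by taking $H \subset G$ a normal index-$2$ subgroup such that $\Pic(X)^\vee$ is induced from an $H$-submodule. The remaining classes are treated analogously on one of the models \textbf{(C)}, \textbf{(P)}, or \textbf{(S)}. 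In every such case, \eqref{eqn:bogoA} combined with the vanishing generalized Bogomolov group gives $\beta(X,G) = 0$.

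\textbf{The obstacle.} The main difficulty is $\fK_9$ itself: its action on $\Pic(X)$ is irreducible enough that neither the product decomposition nor the induction reduction of Lemma~\ref{lemm:bog} applies, and the generalized Bogomolov group fails to vanish. Consequently Condition~\textbf{(A)} no longer forces $\beta(X,G) = 0$, and in fact $\beta(X,G)$ can be shown to be nontrivial while \textbf{(A)} still holds, so that~\textbf{(U)} fails. The detailed verification — which isolates $\fK_9$ as the sole exception in the statement — is deferred to Section~\ref{sect:exept} per the paper's roadmap.
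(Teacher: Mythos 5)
Your high-level architecture (enumerate the conjugacy classes of $C_2^2$ in $\GL_3(\bZ)$ not containing $\eta$, exhibit a model for each, and isolate $\fK_9$ for Section~\ref{sect:exept}) matches the paper, but the mechanism you propose for the non-exceptional classes does not work as stated, and it is not what the paper does. The paper's proof of Lemma~\ref{lemm:c22} is almost entirely an explicit fixed-point analysis: for $\fK_1$, $\fK_5$ and $\fK_6$ it shows that Condition {\bf (A)} \emph{always fails} (so the equivalence is vacuous), and for $\fK_2$, $\fK_3$, $\fK_4$ and most sub-cases of $\fK_7$ it shows that {\bf (A)} forces an honest $G$-fixed point on $X$, whence {\bf (U)} by \eqref{eqn:xg}. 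A Bogomolov-type argument appears only in one sub-case of $\fK_7$, and there it hinges on locating an abelian subgroup of index $2$ in $G$, which depends on the fine structure of $G_T$; the paper never claims, and its case analysis strongly suggests one cannot claim, that $\rB^2(G,\Pic(X)^\vee\otimes k^\times)$ vanishes uniformly for all extensions of $C_2^2$ by abelian $G_T$. Your proposal rests entirely on that uniform vanishing ("Condition {\bf (A)} pushes both into the corresponding generalized Bogomolov groups \dots the vanishing generalized Bogomolov group gives $\beta=0$"), but the vanishing is precisely the step you leave unverified, and Section~\ref{sect:exept} shows that for the dihedral, quaternion and semidihedral groups occurring here the twisted Bogomolov group genuinely fails to vanish — so the vanishing cannot be taken for granted for the other classes either.

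Two further steps are misapplications of the cited results. First, Proposition~\ref{prop:dp6} concerns $G\subset\Aut(T)$ for a two-dimensional torus, i.e.\ generically free actions on the surface; when you project $X=\bP^1\times Y$ to $Y$, the induced action acquires a kernel whenever $G_T$ contains elements acting only on the $\bP^1$-factor, and Example~\ref{exam:p1} shows that for non-generically-free actions {\bf (A)} does \emph{not} imply {\bf (U)}, so you cannot conclude $\beta_2=0$ this way. Second, Lemma~\ref{lemm:bog} requires the coefficient module to be induced from the index-$2$ subgroup $H$; the contribution of a $G$-invariant $\bP^1$-factor to $\Pic(X)^\vee$ is a trivial $\bZ$, not an induced module, so the lemma does not lift the vanishing of $\rB^2(H,k^\times)$ to $\rB^2(G,k^\times)$. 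To repair the argument you would have to replace the Bogomolov reductions by the explicit computations the paper carries out: determine, class by class, when {\bf (A)} can hold at all, and when it does, either produce a $G$-fixed point or an index-$2$ abelian subgroup.
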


\begin{proof} 
We have 9 conjugacy classes of $C_2^2$ in $\GL_3(\bZ)$ not containing $\eta$, denoted by 
$$
\fK_1,\ldots, \fK_9.
$$
We study their realizations:

 \ 

 \noindent
{\bf Cases  $\fK_1$ and $\fK_2$:} Here, $X=\bP^1\times \bP^1\times \bP^1$ and  
    \begin{align*}
    \fK_1=&\langle\mathrm{diag}(-1,-1,1),\mathrm{diag}(-1,1,-1)\rangle,\\
   \fK_2=&\langle\mathrm{diag}(1,1,-1),\mathrm{diag}(-1,1,-1)\rangle.
     \end{align*}
    Using Remark~\ref{rema:alt}, we see that Condition {\bf (A)} fails if $\pi^*(G)=\mathfrak K_1$. 
    When $\pi^*(G)=\mathfrak K_2$, Condition {\bf (A)} implies that
    $$
    G_T\subset\{(1,t,1):t\in\bG_m(k)\}\subset T(k).
    $$
    In this case, $X^G\ne\emptyset$ and ${\bf (U)}$ holds.

\ 

\noindent 
{\bf Cases   $\fK_3$, $\fK_4$, and $\fK_5$}: Here, $X=\bP^1\times Q$, with $G$ switching the factors in  $Q=\bP^1\times \bP^1$. The groups are  
 {\tiny  $$
       \mathfrak K_3=\left\langle\begin{pmatrix}
            1&0&0\\
            0&-1&0\\
            0&0&1
        \end{pmatrix},\begin{pmatrix}
            0&0&1\\
            0&1&0\\
            1&0&0     
\end{pmatrix}\right\rangle  ,\quad \mathfrak K_4=\left\langle\begin{pmatrix}
            -1&0&0\\
            0&1&0\\
            0&0&-1
        \end{pmatrix},\begin{pmatrix}
            0&0&1\\
            0&1&0\\
            1&0&0     
    \end{pmatrix}\right\rangle,
    $$}
{\tiny    $$ 
    \mathfrak K_5=\left\langle\begin{pmatrix}
            -1&0&0\\
            0&1&0\\
            0&0&-1
        \end{pmatrix},\begin{pmatrix}
            0&0&1\\
            0&-1&0\\
            1&0&0     
    \end{pmatrix}\right\rangle.
    $$}

 \ 
 
 \noindent  
When $\pi^*(G)=\mathfrak K_3$,  Condition {\bf (A)} implies that
    $$
    G_T\subset\{(t_1,1,t_2):t_1,t_2\in\bG_m(k)\}\subset T(k).
    $$
When $\pi^*(G)=\mathfrak K_4$, 
    $$
    G_T\subset\{(1,t,1):t\in\bG_m(k)\}\subset T(k).
    $$
In both cases, $X^G\neq\emptyset$ and thus {\bf (U)}. 
However,  when $\pi^*(G)=\mathfrak K_5$, the subgroup generated by $$
    (1,-1,1)\in G_T,\quad \text{and a lift to $G$ of }{\tiny \begin{pmatrix}
        0&0&1\\
        0&-1&0\\
        1&0&0
    \end{pmatrix}}$$ 
    is an abelian group without fixed points. 

\ 

    \noindent 
{\bf Cases $\fK_6$ and $\fK_7$}: Here, $X=\bP^3$, and  
  {\tiny  $$ 
   \mathfrak K_6= \left\langle\begin{pmatrix} 0&0&1\\
    -1&-1&-1\\
     1& 0& 0
     \end{pmatrix},
\begin{pmatrix}
    -1&-1&-1\\
     0& 0& 1\\
     0&1&0
     \end{pmatrix}\right\rangle,\quad
      \mathfrak K_7=   \left\langle \begin{pmatrix}
       0&  1 &0\\
    1 &0 &0\\
 0 & 0 & 1
\end{pmatrix},
   \begin{pmatrix}  1 &0 &0\\
    0 &1 &0\\
     -1 & -1 & -1
   \end{pmatrix}\right\rangle.
   $$}

\

 When $\pi^*(G)=\mathfrak K_6$, then, up to conjugation, the $G$-action on $\bP^3_{y_1,y_2,y_3,y_4}$ is given by $G_T$ and a lift of $\fK_6$ generated by
   \begin{align*}
    g_1: (\mathbf y)&\mapsto (y_3,ay_4,y_1,ay_2),\quad a\in \bG_m(k),\\
   g_2: (\mathbf y)&\mapsto  (y_4,b_1y_3,b_1y_2,b_2y_1),\quad b_1,b_2\in \bG_m(k).
   \end{align*}
For $I\subset\{1,2,3,4\}$, let $s_{I}$ be the diagonal matrix changing the signs of $y_i,i\in I$. The abelian groups
$$
\langle g_1, s_{\{1,2\}}\rangle,\quad \langle g_2,s_{\{1,3\}}\rangle,\quad
\langle g_1g_2, s_{\{1,4\}}\rangle
$$
have no fixed points on $\bP^3$. On the other hand, we have that
$$
(g_1g_2)^2=\mathrm{diag}(1,1,b_2,b_2).
$$
If $b_2\ne1$, then $s_{\{1,2\}}\in G_T$. If $b_2=1$, then
$$
(s_{\{i\}}g_1g_2)^2=s_{\{1,2\}}\in G_T,
$$
for any $i=1,2,3$, or $4$ such that $s_{\{i\}}\in G_T$. Thus, in all cases, Condition {\bf (A)} fails.

\

When $\pi^*(G)=\mathfrak K_7$, $G$ is generated by $G_T$, and  
\begin{align*}
    g_3: (\mathbf y)\mapsto (c_1y_1,c_2y_2,y_4,y_3),\quad c_1,c_2\in\bG_m(k),\\
     g_4: (\mathbf y)\mapsto (y_2,y_1,c_3y_3,c_4y_4),\quad c_3,c_4\in\bG_m(k). 
\end{align*}
Note that the abelian group generated by
$$
\langle g_3 g_4,\mathrm{diag}(1,-1,a_1,-a_1)\rangle,
$$
does not fix any points on $\bP^3$, for any $a_1\in\bG_m(k)$.

If $c_1\ne\pm c_2$ and $c_3\ne \pm c_4$, then $g_3$ and $g_4$ generate one of 
$$
\mathrm{diag}(1,-1,1,-1) \quad \text{and}\quad\mathrm{diag}(1,-1,-1,1),
$$
and Condition {\bf (A)} fails. Thus, we may assume that $c_1=\pm c_2$ and  all elements in $G_T$ are of the form 
$$
\mathrm{diag}(1,\pm1,a_3,a_4),\quad a_3,a_4\in\bG_m(k), \quad a_3\ne -a_4.
$$
If all elements in $G_T$ are of the form 
$
\mathrm{diag}(1,1,a_3,a_4),
$ 
then $\langle G_T,g_4\rangle$ is an abelian subgroup of $G$ of index 2. It follows that the Bogomolov multiplier $\rB^2(G,k^\times)=0$ and Condition {\bf (A)} implies {\bf (U)}.

Now, we consider the case when $G_T$ contains elements of the form 
$$
\mathrm{diag}(1,-1,a_3,a_4).
$$
Up to multiplying $g_3$ with such an element, we may assume that $c_1=-c_2$. 
 We divide the argument into the following subcases:
\begin{enumerate}
\item 
When
$c_1=-c_2$, $c_3=c_4$ and all elements in $G_T$ are of the form 
$$
\mathrm{diag}(1,\pm1,a_3,a_3),
$$
   then $G$ fixes $[0:0:1:1]\in\bP^3$.
\item 
When
$c_1=-c_2$ and $c_3=-c_4$, then $\langle g_3,g_4\rangle$ is an abelian group with no fixed points on $\bP^3.$
\item 
When $c_1=-c_2$, $c_3\ne \pm c_4$ and $G_T$ contains an element 
$$
\varepsilon=\mathrm{diag}(1,\pm1,a_3,a_4),\quad a_3,a_4\in\bG_m
$$
where $\mathrm{ord}(\frac{a_3}{a_4})\geq\mathrm{ord}(-\frac{c_4}{c_3})=\mathrm{ord}(\frac{c_4}{c_3})$, then there exists $n\in\bZ$ such that 
$$
\frac{a_3^n}{a_4^n}=-\frac{c_4}{c_3},\quad \text{i.e.,}\quad a_3^nc_3=-a_4^nc_4.
$$
We are reduced to the previous case. In particular, the abelian group 
$
\langle g_3,\varepsilon^ng_4\rangle
$
has no fixed points on $\bP^3$.

\item When $c_1=-c_2$, $c_3\ne \pm c_4$ and $G_T$ contains an element 
$$
\varepsilon=\mathrm{diag}(1,-1,a_3,a_4),\quad a_3,a_4\in\bG_m(k), \quad a_3\ne \pm a_4
$$
where $\mathrm{ord}(\frac{c_4}{c_3})>\mathrm{ord}(\frac{a_3}{a_4})=-\mathrm{ord}(\frac{a_3}{a_4})$, then there exists $n\in\bZ$ such that 
$$
\frac{c_4^{2n}}{c_3^{2n}}=-\frac{a_3}{a_4},
$$
and thus 
$$
\varepsilon\cdot g_4^{2n}=\mathrm{diag}(1,-1,a_3c_3^{2n},-a_3c_3^{2n}).
$$
By the observation above, the abelian group $\langle g_3g_4,\varepsilon g_4^{2n}\rangle$ does not fix points on $\bP^3$. 
\end{enumerate}
Thus, Condition {\bf (A)} implies {\bf (U)} when $\pi^*(G)=\fK_7$.

\

\noindent
{\bf Case $\fK_8$}: The group is given by 
    {\tiny$$
     \mathfrak K_8=   \left\langle \begin{pmatrix}
       1&  1 &1\\
    0 &0 &-1\\
 0 & -1 & 0
\end{pmatrix},
   \begin{pmatrix}  0 &0 &1\\
    -1 &-1 &-1\\
     1 & 0 & 0
   \end{pmatrix}\right\rangle.
   $$}

  \noindent This is a subgroup contained in a $\fD_4$, covered in Lemma~\ref{lemm:d4}.

\

\noindent
 {\bf Case $\fK_9$}: This is the exceptional case. The group is given by {\tiny$$
     \mathfrak K_9=   \left\langle    \begin{pmatrix}  0 &1 &-1\\
    1 &0 &-1\\
     0 & 0 & -1
   \end{pmatrix},\begin{pmatrix}
       -1&  0 &0\\
    -1 &0 &1\\
 -1 & 1 & 0
\end{pmatrix}
\right\rangle.
   $$} 
   
   \noindent In Section~\ref{sect:exept}, we show that $\beta(X,G)\ne 0$ for all $G$ with $\pi^*(G)=\mathfrak K_9$.

\end{proof}

\begin{lemm} 
\label{lemm:d4}
Assume that $\pi^*(G)\subseteq \fD_4=\langle \tau_1,\tau_2\rangle$, 
where
$$
{\tiny\tau_1=\begin{pmatrix}
            1&0&0\\
            -1&-1&-1\\
            0&0&1
        \end{pmatrix},
        \quad
        \tau_2=\begin{pmatrix}
            1&1&1\\
            -1&0&0\\
            0&-1&0     
    \end{pmatrix}.}
    $$ 
Then 
$$
{\bf (A)} \Longleftrightarrow {\bf (U)}.
$$
\end{lemm}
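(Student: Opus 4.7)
The implication \textbf{(U)}$\Rightarrow$\textbf{(A)} is automatic, so I focus on \textbf{(A)}$\Rightarrow$\textbf{(U)}. By Theorem~\ref{thm:toricge} it suffices to prove $\beta(X,G)=0$, and by Lemma~\ref{lemm:beta} I may assume throughout that $G$ is a $2$-group. Since $\pi^*(G)\subseteq\fD_4=\langle\tau_1,\tau_2\rangle$, the simplified fan $\Sigma$ with $6$ rays and $8$ maximal cones constructed at the end of Section~\ref{sect:3-folds} is $\pi^*(G)$-invariant; because vanishing of $\beta$ is a stable birational invariant, I replace $X$ by $X(\Sigma)$, the blowup of a cone over a smooth quadric surface at its vertex.

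A direct computation of the right action on the generators $v_1,\dots,v_6$ shows that $\tau_1$ swaps $v_2\leftrightarrow v_5$ and fixes the remaining four rays, while $\tau_2$ cyclically permutes $(v_2,v_4,v_5,v_3)$ and fixes $v_1,v_6$. Consequently, as a $\fD_4$-module,
\[
\mathrm{PL} \;\cong\; \bZ\oplus\bZ\oplus\bZ[\fD_4/\langle\tau_1\tau_2^2\rangle],
\]
i.e.\ a sum of trivial modules with a transitive permutation module on four letters with order-$2$ point stabilizer. Dualizing $0\to\rM\to\mathrm{PL}\to\Pic(X)\to0$ and tensoring with $k^\times$ expresses $\Pic(X)^\vee\otimes k^\times$ in terms of permutation modules and the cocharacter lattice $\rN$, both of which have explicit cohomology.

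By \eqref{eqn:bogoA}, Condition \textbf{(A)} forces $\beta(X,G)\in\rB^2(G,\Pic(X)^\vee\otimes k^\times)$, so it remains to prove
\[
\rB^2(G,\Pic(X)^\vee\otimes k^\times)=0
\]
for every $2$-group extension $1\to G_T\to G\to \pi^*(G)\to 1$ with $\pi^*(G)\subseteq\fD_4$. The plan is to use the explicit resolutions \eqref{eqn:resolutionQ8}--\eqref{eqn:resolutionsemidihedral} for the relevant $2$-groups (the $2$-subgroups of $\fD_4$ have quotients by cyclic subgroups giving $\fD_4$ or $C_2^2$), together with repeated application of Lemma~\ref{lemm:bog}: each time $H\triangleleft G$ has cyclic quotient and the permutation summand of $\mathrm{PL}$ becomes induced from $H$, the Bogomolov multiplier lifts from $H$ to $G$. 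Iterating, every contribution is reduced to the cohomology of an abelian subgroup with coefficients in a sum of permutation modules, where the generalized Bogomolov multiplier vanishes by definition. (A parallel verification via {\tt HAP}, as used in Lemma~\ref{lemm:indeed1}, is also available as a cross-check.)

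The hard part is the final case $\pi^*(G)=\fD_4$ with nontrivial $G_T$ (and the $C_4$-case conjugate to $\langle\theta_4\rangle$ and the $C_2^2$-case $\fK_8$ that were deferred here): one must verify that the cohomological contribution coming from the $\bZ[\fD_4/\langle\tau_1\tau_2^2\rangle]$ summand of $\mathrm{PL}$, after passing to $\Pic(X)^\vee$ and twisting by the extension class of $G$ over $\pi^*(G)$, remains detected by the abelian subgroups of $G$. Once this is checked, Theorem~\ref{thm:toricge} yields \textbf{(U)} and completes the lemma.
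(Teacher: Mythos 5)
Your reduction to $\beta(X,G)=0$, the choice of the simplified model from Section~\ref{sect:3-folds}, and the computation of the $\fD_4$-action on the rays (hence the decomposition $\mathrm{PL}\cong\bZ^{2}\oplus\bZ[\fD_4/\langle\tau_1\tau_2^2\rangle]$) are correct and match the paper's setup. The gap is exactly at the step you defer as ``the hard part'': you propose to conclude by proving $\rB^2(G,\Pic(X)^\vee\otimes k^\times)=0$ through iterated use of Lemma~\ref{lemm:bog}, but that lemma only applies to summands that are \emph{induced} from a normal subgroup with cyclic quotient. On this model $\Pic(X)\cong\bZ\oplus\rP$ with $G$ acting trivially on the $\bZ$-summand, so $\rB^2(G,\Pic(X)^\vee\otimes k^\times)$ contains the ordinary Bogomolov multiplier $\rB^2(G,k^\times)$ as a direct summand, and Lemma~\ref{lemm:bog} gives no information about it (the trivial module is not induced from anything). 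Since $G$ ranges over all $2$-group extensions $1\to G_T\to G\to\fD_4\to1$ with $G_T$ an arbitrary finite abelian $2$-subgroup of $T(k)$, there is no cited or obvious reason why $\rB^2(G,k^\times)$ should vanish uniformly for this family; your claim that ``every contribution is reduced to the cohomology of an abelian subgroup'' tacitly assumes all summands are induced, which fails for the trivial ones. As written, the argument does not close.

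The paper's proof is structured precisely to avoid this issue. It kills only the $\rP$-component of $\beta$ cohomologically: the index-$2$ subgroup $H$ with $\pi^*(H)=\langle\tau_1,\tau_2^2\rangle$ is conjugate to $\fK_7$, hence satisfies {\bf (U)} under Condition {\bf (A)} by Lemma~\ref{lemm:c22}; since $\rP=\mathrm{Ind}_H^G(\bZ)$, Lemma~\ref{lemm:bog} makes the restriction to $H$ injective on that summand, giving $\beta(D_\sigma,G)=0$ for the $G$-invariant boundary divisor $D_\sigma\simeq\bP^1\times\bP^1$. The component of $\beta$ in the trivial summand is never shown to vanish directly; instead {\bf (U)} for $X$ is deduced geometrically, by contracting to the quadric cone $\bar X\subset\bP^4$ and using the second-intersection-point map $\varrho_*(D_\sigma)\times\bP^4\dashrightarrow\bar X$ from the now $G$-unirational surface. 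To complete your version you would need either a proof that $\rB^2(G,k^\times)=0$ for every such extension $G$ (a nontrivial group-theoretic assertion you do not supply), or a geometric substitute of this kind for the trivial-summand contribution.
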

\begin{proof}
We first assume that 
$\pi^*(G)=\langle\tau_1,\tau_2\rangle$. 
As explained in Section~\ref{sect:3-folds}, a simpler smooth projective model $X$ in this case is the blowup of a quadric cone at its vertex. In particular, we have 
$$
\Pic(X)=\bZ\oplus \rP,\quad \rP=\bZ\oplus \bZ
$$ 
with $G$ acting trivially on the first summand, and switching two factors of  the second summand $\rP$.  

 Let $\Sigma$ be the fan of $X$ given in Section~\ref{sect:3-folds}.  We note that $\pi^*(G)$ acts trivially on the 1-dimensional sublattice $\rN'\subset \rN$ spanned by the ray $v_1=(-1,0,-1)$. Let $\sigma\in \Sigma$ be the cone generated by $v_5$. It corresponds to a $G$-invariant toric boundary divisor $D_\sigma\subset X$. 
On the other hand, the sublattice $N'$ also gives rise to a quotient torus, cf. \cite[Section 2.3]{KT-toric}. In particular, we have
$$
(\rN/\rN')^\vee=\sigma^\perp\cap \rM\simeq \bZ^2.
$$
For any cone $\sigma'\in \Sigma$ such that $\sigma'\supseteq\sigma$, put 
$$
\bar\sigma':=(\sigma'+\bR\sigma)/\bR\sigma\subset (\rN/\rN')_\bR.
$$
All such $\bar\sigma'$ form a new $G$-invariant fan $\Sigma_{\sigma}$. Let $X(\Sigma_\sigma)$ be the toric variety associated with $\Sigma_{\sigma}$. One can check that $X(\Sigma_\sigma)=\bP^1\times\bP^1$. 

By \cite[Section 2.3]{KT-toric}, $X(\Sigma_\sigma)$ is $G$-isomorphic to $D_\sigma$, and there exists a $G$-equivariant rational map
$$
\rho: X\dashrightarrow X({\Sigma}_\sigma)\simeq D_\sigma.
$$ 
Note that the $G$-action on $D_\sigma$ is not necessarily generically free. The map $\rho$ induces a homomorphism of $G$-lattices
$$
\rho^*: \Pic(D_\sigma)\to\Pic(X).
$$ This yields a commutative diagram of $G$-modules
$$
{\xymatrix{
0\ar[r] &\rM\ar[r]&\mathrm{PL}(X)\ar[r]&\Pic(X)\ar[r]&0\\
0\ar[r] &(\rN/\rN')^\vee\ar[r]\ar[u]&\mathrm{PL}(D_\sigma)\ar[r]\ar[u]&\Pic(D_\sigma)\ar[r]\ar[u]^{\rho^*}&0\\
}
}
$$
where $\mathrm{PL}(X)=\bZ^6$ and $\mathrm{PL}(D_\sigma)=\bZ^4$. Following the diagram, one sees that the dual map 
$$
(\rho^*)^\vee:\Pic(X)^\vee\to\Pic(D_\sigma)^\vee
$$
can be identified with the canonical projection (note that $\Pic(X)$ is self-dual under the $G$-action) 
$$
\rP\oplus \bZ\to \rP.
$$

Now assume that the $G$-action on $X$ satisfies Condition {\bf (A)}. Let
 $$
 \beta:=\beta(X,G)\in\rH^2(G,\Pic(X)^\vee\otimes k^\times),
 $$ 
 and $H$ be the maximal subgroup of $G$ such that $\pi^*(H)=\langle\tau_1,\tau_2^2\rangle\simeq C_2^2$. We have that $[G:H]=2$ and $H$ acts trivially on $\rP$; in particular,
$$
\rP=\mathrm{Ind}_H^G(\bZ)
$$ 
is the $G$-module induced from the trivial $H$-module $\bZ$.   Consider the commutative diagram 
 $$
 {\xymatrix{
 \rH^2(G,(\rP\oplus\bZ)\otimes k^\times)\ar[d]^{\mathrm{res}_1}\ar[rr]^{\quad\mathrm{pr}_1}&&\rH^2(G,\rP\otimes k^\times)\ar[d]^{\mathrm{res}_2}\\
 \rH^2(H,(\rP\oplus\bZ)\otimes k^\times)\ar[rr]^{\quad\mathrm{pr}_2}&& \rH^2(H,\rP\otimes k^\times)
 }}
$$
where $\res_1$ and $\res_2$ are the corresponding restriction homomorphisms, and $\mathrm{pr}_1$ and $\mathrm{pr}_2$ are projections induced by $(\rho^*)^\vee$. By functoriality,  
$$
\mathrm{pr}_1(\beta)=\beta(D_\sigma,G)
$$
where $\beta(D_\sigma,G)$ is the class corresponding to the $G$-action on $D_\sigma$.

Since $\pi^*(H)=C_2^2$ is conjugate to $\mathfrak K_7$, by the proof of Lemma~\ref{lemm:c22}, Condition {\bf (A)} implies that the $H$-action on $X$ is {\bf (U)}, and thus
$$
\res_1(\beta)=0, \quad \mathrm{pr}_2(\res_1(\beta))=0.
$$ 
By Lemma~\ref{lemm:bog}, we know that $\res_2$ is injective. It follows that 
$$
\mathrm{pr}_1(\beta)=0
$$
and thus the $G$-action on $D_{\sigma}$ is {\bf(U)}. Now let 
$$
\varrho: X\to \bar X
$$
be the contraction of the boundary divisor in $X$ corresponding to the ray $v_6=(1,0,1)$. Then $\bar X\subset\bP^4$ is a cone over a smooth quadric surface, and $\varrho$ is the blowup of its vertex. The strict transform $\rho_*(D_\sigma)$ is a $G$-equivariantly unirational surface in $\bar X$.

Finally, the same argument as in \cite[Proposition 3.1]{CTZ-uni} shows that the $G$-action on $\bar X$ is {\bf (U)}: we have a $G$-equivariant dominant rational map 
$$
\varrho_*(D_{\sigma})\times\bP^4\dashrightarrow \bar X,
$$
sending the pair of points $(q_1,q_2)\in \varrho_*(D_{\sigma})\times\bP^4$ to the second intersection point of $X$ with the line passing through $q_1$ and $q_2$. It follows that the $G$-action on $X$ is also {\bf (U)}.

The same proof applies when $\pi^*(G)$ is a subgroup of $\fD_4=\langle\tau_1,\tau_2\rangle$ and $G$ swaps the two factors of $\rP$. When $G$ does not swap the two factors, $\pi^*(G)$ has been already covered by previous lemmas.
\end{proof}

\begin{lemm} 
\label{lemm:d4sum}
Assume that $\pi^*(G)=\fD_4$. Then 
$$
{\bf (A)} \Longleftrightarrow {\bf (U)}.
$$
\end{lemm}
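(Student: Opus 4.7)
The plan is to reduce Lemma~\ref{lemm:d4sum} to Lemma~\ref{lemm:d4} by enumerating the $\GL_3(\bZ)$-conjugacy classes of $\fD_4$ and checking that each non-vacuous case falls under the previous lemma. The direction ${\bf (U)} \Rightarrow {\bf (A)}$ is standard: for a $G$-equivariant dominant rational map $\bP(V) \dashrightarrow X$, any abelian subgroup $A \subseteq G$ has a fixed point on $\bP(V)$, and {\bf (A)} is an equivariant birational invariant of smooth projective varieties. So I concentrate on ${\bf (A)} \Rightarrow {\bf (U)}$. By Lemma~\ref{lemm:exclude}, if $\eta := \diag(-1,-1,-1) \in \pi^*(G)$ then {\bf (A)} already fails, so the equivalence is vacuous; I therefore assume $\eta \notin \pi^*(G)$.

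Write $\fD_4 = \langle r, s \rangle$ with $r$ of order $4$, $s^2 = 1$, and $s r s = r^{-1}$. The unique central involution of $\fD_4$ is $r^2$, so $r^2 \ne \eta$. Combined with the classification of conjugacy classes of $C_4 \subset \GL_3(\bZ)$ from Lemma~\ref{lemm:indeed4} (which furthermore rules out $\langle \theta_2 \rangle$ under {\bf (A)}), the cyclic subgroup $\langle r \rangle$ is $\GL_3(\bZ)$-conjugate to one of $\langle \theta_1 \rangle, \langle \theta_3 \rangle, \langle \theta_4 \rangle$. Since $\tau_2 = \theta_4$, in the case $\langle r \rangle = \langle \theta_4 \rangle$ the group $\pi^*(G)$ lies inside $\langle \tau_1, \tau_2 \rangle$ after conjugating $r$ to $\tau_2$, and Lemma~\ref{lemm:d4} applies directly.

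For the remaining cases $\langle r \rangle = \langle \theta_1 \rangle$ or $\langle \theta_3 \rangle$, the plan is to compute the centralizer and normalizer of $\langle r \rangle$ in $\GL_3(\bZ)$; the involutions $s \in \GL_3(\bZ)$ that invert $r$ form a coset of the centralizer, which can be described explicitly by hand. I then check case-by-case that each resulting $\fD_4$ either (i) contains $\eta$, in which case {\bf (A)} fails by Lemma~\ref{lemm:exclude} and the equivalence is vacuous, or (ii) is $\GL_3(\bZ)$-conjugate to $\langle \tau_1, \tau_2 \rangle$, so that Lemma~\ref{lemm:d4} again delivers ${\bf (A)} \Leftrightarrow {\bf (U)}$.

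\textbf{Main obstacle.} The key technical step is the conjugacy analysis sketched in the last paragraph: verifying that, up to conjugation in $\GL_3(\bZ)$, every $\fD_4$ not containing $\eta$ coincides with $\langle \tau_1, \tau_2 \rangle$. This is a finite check — one can carry it out by explicit matrix computation with the normalizer of each $C_4$, or, more efficiently, by appealing to the classification of finite subgroups of $\GL_3(\bZ)$ (for instance, as tabulated in \texttt{CARAT}), which lists the $\GL_3(\bZ)$-conjugacy classes of $\fD_4$ directly and makes the reduction immediate.
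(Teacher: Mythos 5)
Your overall strategy -- enumerate the $\GL_3(\bZ)$-conjugacy classes of $\fD_4$, discard those for which {\bf (A)} fails, and reduce the rest to Lemma~\ref{lemm:d4} -- matches the first half of the paper's proof, but the dichotomy you propose for the finite check is false, and it hides the one case that actually requires new work. First, alternative (i) is empty: if $\eta=\diag(-1,-1,-1)$ lies in a copy of $\fD_4$, then, being central in $\GL_3(\bZ)$, it must equal the central element $r^2$; but no order-$4$ integer $3\times 3$ matrix squares to $-I$ (its characteristic polynomial would need a real root $\lambda$ with $\lambda^2=-1$). So Lemma~\ref{lemm:exclude} never applies directly to a $\fD_4$. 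Second, alternative (ii) cannot absorb the $\theta_1$- and $\theta_3$-cases either, since the rotation subgroup of $\langle\tau_1,\tau_2\rangle$ is $\langle\theta_4\rangle$, and the conjugacy class of the rotation subgroup is an invariant. The six classes that do fail {\bf (A)} fail it for other reasons: two contain $\theta_2$ and two contain $\theta_3$ (excluded by the fixed-point arguments in Lemma~\ref{lemm:indeed4}, using $G_T\neq 1$), and two contain $\fK_1$, respectively $\fK_6$ (excluded in Lemma~\ref{lemm:c22}).

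The genuine gap is the remaining eighth class, $\pi^*(G)=\langle\iota_2,\theta_1\rangle$ with $\iota_2=\diag(1,1,-1)$, realized on $X=(\bP^1)^3$. It contains neither $\eta$ nor $\theta_2,\theta_3,\fK_1,\fK_6$, it is not conjugate to $\langle\tau_1,\tau_2\rangle$, and {\bf (A)} can hold for it (Condition {\bf (A)} forces $G_T\subset\{(t,1,1)\}$ via its $\fK_4$-subgroup, but does not force $G_T=1$). Your plan provides no argument here. The paper's proof supplies one: writing $\Pic(X)=\bZ\oplus\rP$ with $G$ acting through the swap of the two factors of $\rP$, the subgroup $H$ generated by $G_T$ and lifts of $\iota_2$ and $\theta_1^2$ is abelian of index $2$ and acts trivially on $\Pic(X)$, so $\rB^2(G,k^\times)=0$ and $\rP=\mathrm{Ind}_H^G(\bZ)$; Lemma~\ref{lemm:bog} then gives $\rB^2(G,k^\times\otimes\rP)=0$, hence $\beta(X,G)=0$ and {\bf (U)} holds by Theorem~\ref{thm:toricge}. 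Without this (or an equivalent) argument your proof is incomplete.
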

\begin{proof}  
There are $8$ conjugacy classes of $\fD_4$ in $\GL_3(\bZ)$: up to conjugation, two of them contain $\theta_2$; two of them contain $\theta_3$; among the rest, one contains $\mathfrak K_1$ and one contains $\mathfrak K_6$. From the analysis above, we know that Condition {\bf (A)} fails for these 6 classes.

 One of the two remaining classes is covered by Lemma~\ref{lemm:d4}. In the other case, $\pi^*(G)$ is generated by 
 $$
 \iota_2={\tiny\begin{pmatrix}
    1&0&0\\
    0&1&0\\
    0&0&-1
\end{pmatrix}}\quad \text{and}\quad\theta_1={\tiny\begin{pmatrix}
     1&0&0\\
     0&0&1\\
     0&-1&0
 \end{pmatrix}.
} $$
 This is realized on $X=(\bP^1)^3$, where
 $$
 \Pic(X)=\bZ\oplus \rP,\quad \rP=\bZ\oplus\bZ,
 $$
 $\iota_2$ acts trivially on $\Pic(X)$ and $\theta_1$ switches the two factors of $\rP$.
 Since $\pi^*(G)$ contains a subgroup conjugated in $\GL_3(\bZ)$ to $\mathfrak K_4$, we know that 
 $$
 G_T\subset\{(t,1,1):t\in\bG_m(k)\}\subset T(k).
 $$
 Let $H$ be the subgroup of $G$ generated by $G_T$ and lifts to $G$ of $\iota_2$ and $\theta_1^2$. It follows that $H$ is abelian and $[G:H]=2$. Thus, 
 $$\rB^2(G,k^\times)=0,$$ and  Condition {\bf (A)} implies that 
$$
\beta(X,G)\in\rB^2(G,k^\times\otimes\Pic(X))\simeq\rB^2(G,k^\times\otimes\rP).
 $$
 Note that $H$ acts trivially on $\Pic(X)$ and $\rP=\mathrm{Ind}_H^G(\bZ)$ for the trivial $G$-module $\bZ$. Since $H$ is abelian,  we have that $\rB^2(H,k^\times\otimes\rP)=0$.  Lemma~\ref{lemm:bog} shows that $\rB^2(G,k^\times\otimes\rP)=0$. Thus, we conclude that $\beta(X,G)=0$.
\end{proof}

\section{The exceptional case $\fK_9$}
\label{sect:exept}
This section is devoted to a proof of the following lemma, which completes the proof of Lemma~\ref{lemm:c22}.

\begin{lemm} \label{lemm:k9}
Assume that $G_T\neq1$ and $\pi^*(G)$ contains a subgroup conjugated in $\GL_3(\bZ)$ to 
$$
     \mathfrak K_9=  {\tiny \left\langle    \begin{pmatrix}  0 &1 &-1\\
    1 &0 &-1\\
     0 & 0 & -1
   \end{pmatrix},\begin{pmatrix}
       -1&  0 &0\\
    -1 &0 &1\\
 -1 & 1 & 0
\end{pmatrix}
\right\rangle.}
   $$
Then the $G$-action on a smooth projective model $X$ fails {\bf (U)}. 
\end{lemm}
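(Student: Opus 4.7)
The plan is to show that $\beta(X,G)\in\rH^2(G,\Pic(X)^\vee\otimes k^\times)$ is nonzero; by Theorem~\ref{thm:toricge} this is equivalent to the failure of \textbf{(U)}. Since $\beta$ is compatible with restriction to subgroups, it suffices to exhibit a single subgroup $H\subseteq G$ with $\bar H=\fK_9$ and $H_T=\langle\tau\rangle$ for some involution $\tau\in T[2](k)\setminus\{1\}$ such that $\beta(X,H)\ne 0$. I would therefore reduce immediately to the minimal case in which $\bar G=\fK_9$ and $G_T$ is cyclic of order $2$.

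First I would fix an explicit $\fK_9$-invariant smooth projective fan $\Sigma$ in $\rN=\bZ^3$, drawn from (and if needed equivariantly refined from) one of the maximal models of Section~\ref{sect:3-folds}. The rays of $\Sigma$ and their $\fK_9$-orbits determine $\mathrm{PL}(X)$, and \eqref{eqn:picseq} then yields $\Pic(X)$ as a $\fK_9$-module. A brief matrix calculation should show that $\Pic(X)$ is \emph{not} a stably permutation $\fK_9$-module; this structural feature distinguishes $\fK_9$ from the other conjugacy classes $\fK_1,\ldots,\fK_8$ treated in Lemma~\ref{lemm:c22}.

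Next I would enumerate the $\fK_9$-orbits on the fixed locus $T[2]^{\fK_9}\subset T[2](k)$ -- which a short computation shows to have order $4$ -- and the possible isomorphism types of $G$ arising from the extension $1\to\langle\tau\rangle\to G\to\fK_9\to 1$; up to equivalence these are $C_2^3$, $C_2\times C_4$, $\fD_4$, and $\fQ_8$. For each pair $(\tau,G)$ I would compute $\beta(X,G)$ via the composition of connecting homomorphisms in \eqref{eqn:compute}, lifting $\mathrm{id}_{\Pic(X)}$ successively through \eqref{eqn:picseq} and \eqref{eqn:funcseq}. The resolutions \eqref{eqn:resolutionQ8} and \eqref{eqn:resolutiondihedral} of Section~\ref{sect:gene} supply the machinery for $\fQ_8$ and $\fD_4$; the abelian cases are handled by the standard $2$-periodic resolutions. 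The whole computation can be cross-verified with \texttt{HAP}, as in Lemma~\ref{lemm:indeed1}.

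The main obstacle I anticipate is that the non-vanishing of $\beta(X,G)$ cannot be detected by restriction to any abelian subgroup: by Condition \textbf{(A)} and \eqref{eqn:xg}, such restrictions all vanish, so by \eqref{eqn:bogoA} the class lies in the generalized Bogomolov multiplier $\rB^2(G,\Pic(X)^\vee\otimes k^\times)$. Unlike the situations in Lemmas~\ref{lemm:indeed}--\ref{lemm:d4sum}, the $\fK_9$-module $\Pic(X)$ admits no equivariant decomposition into permutation summands, so the obstruction cannot be reduced to Amitsur invariants on $\bP^1$ or $\bP^2$ factors of a convenient birational model. The crux will be to produce an explicit $2$-cocycle representative of $\beta(X,G)$ and verify, by direct comparison with the coboundary image, that it is nonzero in $\rB^2(G,\Pic(X)^\vee\otimes k^\times)$; this non-vanishing is the algebraic manifestation of the exceptional behavior of $\fK_9$.
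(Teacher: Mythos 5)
Your overall strategy is the right one -- reduce to a minimal subgroup surjecting onto $\fK_9$, compute $\beta(X,G)$ by pushing $\mathrm{id}_{\Pic(X)}$ through \eqref{eqn:compute} using explicit resolutions, and check that the resulting cocycle is not a coboundary. This is exactly what the paper does. However, your reduction step contains a genuine gap that changes the shape of the whole argument.

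You claim it suffices to find a subgroup $H\subseteq G$ with $\pi^*(H)=\fK_9$ and $H_T=\langle\tau\rangle$ of order $2$, and accordingly you only list the order-$8$ extensions $C_2^3$, $C_2\times C_4$, $\fD_4$, $\fQ_8$. Such a subgroup need not exist. A minimal subgroup surjecting onto $\fK_9$ is generated by two lifts $\sigma_1,\sigma_2$ of the generators, and its torus part is then forced to be $\langle \sigma_1^2,\sigma_2^2,(\sigma_1\sigma_2)^2\rangle$, whose order is dictated by the scalars appearing in the lifts and can be any power of $2$. Concretely, if $G\simeq \fQ_{16}$ acts with $\pi^*(G)=\fK_9$, then every maximal subgroup of $G$ maps onto a \emph{proper} subgroup of $C_2^2$, so $G$ itself is the only subgroup surjecting onto $\fK_9$, and $G_T\simeq C_4$. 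Worse, for all the groups that actually occur, $\beta(X,G)$ restricts to zero on \emph{every} proper subgroup (by Condition {\bf (A)} and the other lemmas of Section~\ref{sect:uni} it lies in the generalized Bogomolov multiplier), so there is no restriction shortcut at all: the cocycle computation must be performed on $G$ itself for each group in three infinite families. After normalizing the lifts, the paper shows that under Condition {\bf (A)} (which one may assume, since {\bf (A)} is necessary for {\bf (U)}) the minimal $G$ is $\fQ_{2^n}$, $\fD_{2^{n-1}}$, or the semidihedral group $\mathfrak{SD}_{2^n}$, for arbitrary $n$. Your list misses the semidihedral family entirely (it first appears in order $16$, outside your order-$8$ window), and hence also the need for the resolution \eqref{eqn:resolutionsemidihedral}; and the abelian candidates $C_2^3$, $C_2\times C_4$ cannot contribute, since for abelian $G$ Condition {\bf (A)} gives a fixed point and forces $\beta(X,G)=0$ by \eqref{eqn:xg}.

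Two smaller points. First, the observation that $\Pic(X)$ fails {\bf (SP)} as a $\fK_9$-module is correct but plays no role in proving $\beta\neq 0$; failing {\bf (SP)} obstructs {\bf (SL)}, not {\bf (U)}. Second, the computation is carried out on the model {\bf (S)} with its $14$ rays and $\Pic(X)=\bZ^{11}$; the final non-membership check is against the image of the third differential of the resolution in $\Pic(X)^\vee$ (i.e., in $\rH^3(G,\Pic(X)^\vee)$ after replacing $k^\times$ by $\bQ/\bZ$), with the answer depending on the order $2^{n-2}$ of the normalized scalar $b_2$ -- so the verification genuinely has to be done uniformly in $n$, not for a single small group.
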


\begin{proof}
We may assume that $\pi^*(G)=\mathfrak K_9\simeq C_2^2$, and no proper subgroup of $G$ surjects to $C_2^2$ via $\pi^*$. Then $G$ is generated by 
$$
\sigma_1: (t_1,t_2,t_3)\mapsto (b_1t_2,b_2t_1,\frac{b_3}{t_1t_2t_3}),\quad \sigma_2:(t_1,t_2,t_3)\mapsto (\frac{c_1}{t_1t_2t_3},c_2t_3,c_3t_2).
$$
 The torus part $G_T$ is generated by 
$$
\sigma_1^2=\mathrm{diag}(b_1b_2,b_1b_2,\frac1{b_1b_2}),\quad 
\sigma_2^2=\mathrm{diag}(\frac1{c_2c_3},c_2c_3,c_2c_3),
$$
and 
$$
(\sigma_1\sigma_2)^2=\mathrm{diag}(\frac{c_1c_3}{b_1b_3},\frac{b_1b_3}{c_1c_3},\frac{c_1c_3}{b_1b_3}). 
$$
Since $G$ is finite, we know that $b_1b_2, c_2c_3$ and ${c_1c_3}/{b_1b_3}$ have finite orders. Up to a change of variables 
$$
t_1\mapsto r_1t_1,\quad t_2\mapsto r_2t_2,\quad t_3\mapsto r_3t_3,
$$
where $r_1,r_2,r_3\in k^\times$ are such that
$$
r_1b_1=r_2,\quad r_1r_2r_3^2b_3=r_1^2r_2r_3c_1=1,
$$
we may assume that $b_1=b_3=c_1=1$, and $b_2,c_2,c_3$ are roots of unity whose orders are powers of 2.

When the $G$-action satisfies Condition {\bf (A)}, we know that $G_T$ is cyclic. Indeed, if $G_T$ is not cyclic, then $G$ contains one of the following subgroups which fail Condition {\bf(A)}:
$$
\langle\sigma_1,\mathrm{diag}(1,1,-1)\rangle, \quad \langle\sigma_1,\mathrm{diag}(-1,-1,1)\rangle,
$$
$$
\langle\sigma_2,\mathrm{diag}(-1,1,1)\rangle\quad \langle\sigma_2,\mathrm{diag}(1,-1,-1)\rangle,
$$
$$
\langle\sigma_1\sigma_2,\mathrm{diag}(1,-1,1)\rangle\quad \langle\sigma_1\sigma_2,\mathrm{diag}(-1,1,-1)\rangle.
$$
From this, we see that at least two of 
$$
\sigma_1^2,\quad \sigma_2^2,\quad (\sigma_1\sigma_2)^2
$$ 
have order 1 or 2. Up to a permutation of coordinates, we may assume that the latter two have order 1 or 2. One can check that the $G$-action then satisfies Condition {\bf (A)}. Let $n\in\bZ$ such that $b_2$ has order $2^{n-2}$. There are three cases:
\begin{enumerate}
    \item  $c_2=1,c_3=-1$ : in this case $G\simeq \fQ_{2^n}$,
    \item  $c_2=c_3=1$ : in this case $G\simeq \fD_{2^{n-1}}$,
    \item  $c_2=c_3=-1$ : in this case $G\simeq \mathfrak{SD}_{2^n}$.
\end{enumerate}

 In each case, we have $\sigma_1=x$ and $\sigma_2=y$, where $x,y$ are the same as in the presentations of $G$, with generators and relations, given in Section~\ref{sect:gene}. Using the   resolutions~\eqref{eqn:resolutionQ8},~\eqref{eqn:resolutiondihedral}, and~\eqref{eqn:resolutionsemidihedral}, we compute $\beta(X,G)$, 
 as an element in $\rH^3(G,\Pic(X)^\vee)$, following the recipe in
Section~\ref{sect:gene}.

We recall the presentation of  $\Pic:=\Pic(X)$ on the smooth projective model {\bf (S)}, via the exact sequence
$$
0\to \rM\to \mathrm{PL} \to \Pic \to 0,
$$
where $\mathrm{PL}=\bZ^{14}$ is generated by the following 14 rays:
$$
   ( -1, 0, 0 ),
    ( -1, 1, 0 ),
    ( 0, -1, 1 ),
    ( 0, 0, -1 ),
    ( 0, 0, 1 ),
    ( 0, 1, -1 ),( 1, -1, 0 ),
$$
$$
    ( 1, 0, 0 ),
    ( 1, 0, -1 ),
    ( 1, -1, 1 ),
    ( 0, -1, 0 ),
    ( 0, 1, 0 ),
    ( -1, 1, -1 ),
    ( -1, 0, 1 ),
$$
labeled by $v_i,i =1,\ldots,14$, in order. The character lattice $\rM$ is embedded in $\mathrm{PL}$ as a submodule with basis
\begin{align*}
m_1&=-v_{1} - v_{2} + v_{7} + v_{8} + v_{9} + v_{10} - v_{13} - v_{14},\\
m_2&=v_{2} - v_{3} + v_{6} - v_{7} - v_{10} - v_{11} + v_{12} + v_{13},\\
m_3&=v_{3} - v_{4} + v_{5} - v_{6} - v_{9} + v_{10} - v_{13} + v_{14}.
\end{align*}
Let $p_{1}\ldots,p_{11}$ be a basis of $\Pic$, and  $e_{1},\ldots,e_{11}$ the corresponding dual basis of $\Pic^\vee$, such that $p_i$ can be lifted to $\mathrm{PL}$ by 
\begin{align}\label{eqn:liftpL}
    p_i\mapsto v_{4+i},\quad i=1,2,\ldots,9,
\end{align}
$$
 p_{10}\mapsto(v_4-v_5), \quad  p_{11}\mapsto(-v_4+v_5-v_9+v_{14}).
$$
Observe that the resolutions~\eqref{eqn:resolutionQ8},~\eqref{eqn:resolutiondihedral}, and~\eqref{eqn:resolutionsemidihedral} start with the same first step. Indeed, the images of $\mathrm{id}_\Pic$ in $\rH^1(G,\rM\otimes\Pic^\vee)$ are the same in all three cases of $G$. We first compute this intermediate class via
  \begin{align}\label{eqn:firstseq}
 {\small\xymatrix@C=1em{
\mathrm{id}_{\Pic}\in\Pic\otimes\Pic^\vee&\mathrm{PL}\otimes\Pic^\vee\ar[l]\ar[dd]^{\tiny{\begin{pmatrix}
     1-x&1-y
 \end{pmatrix}}}&&\\
 &&&&\\ 
 &(\PL\otimes\Pic^\vee)^2&(\rM\otimes\Pic^\vee)^2\ar@{_{(}->}[l]&(k(T)^\times\otimes\Pic^\vee)^2\ar[l]
 }}
 \end{align}
We choose a lift of $\mathrm{id}_{\Pic}$ to $\PL\otimes\Pic^\vee$ given by \eqref{eqn:liftpL}. The resulting class in $(\rM\otimes\Pic^\vee)^2$ is 
\begin{multline}\label{eqn:intcls}
   ( (-m_2-m_3)\otimes e_3, \quad (m_2+m_3)\otimes e_1+m_2\otimes e_2+m_1\otimes e_4+\\+(-m_2-m_3)\otimes e_{10}+(m_2+m_3)\otimes e_{11}).
\end{multline}
This class represents the image of $\mathrm{id}_{\Pic}$ in $\rH^1(G,\rM\otimes\Pic^\vee)$. We lift this class to $(\rM\otimes\Pic^\vee)^2$ via the set-theoretic map $\rM\to k(T)^\times$ given by 
$$
(a_1,a_2,a_3)\to t_1^{a_1}t_2^{a_2}t_3^{a_3}.
$$
The next step of the computation depends on the isomorphism class of $G$. We proceed case-by-case. 

 For $G=\fQ_{2^n}$, we continue \eqref{eqn:firstseq} via 
 $$
{\small \xymatrix{
(k(T)^\times\otimes\Pic^\vee)^2\ar[rrr]^{\tiny\begin{pmatrix}
  N_x& \!\! {yx}+1\\
  -1-y&\!\!x-1
  \end{pmatrix}}&&&(k(T)^\times\otimes\Pic^\vee)^2&&\\
  &&&(k^\times\otimes\Pic^\vee)^2\ar@{^{(}->}[u]
  &&\\
  &&&(\bQ\otimes\Pic^\vee)^2\ar[u]\ar[rr]^{\tiny \begin{pmatrix}
 1-x \\yx-1 \end{pmatrix}}&&\bQ\otimes\Pic^\vee&\\
  &&&&&\bZ\otimes\Pic^\vee\ar@{^{(}->}[u].
  }}
  $$
The first two arrows come from the resolution \eqref{eqn:resolutionQ8}. The rest of the diagram arises from the exact sequence 
  $$
  0\to\bZ\to\bQ\to\bQ/\bZ\to 0.
  $$
Since $G$ is a finite group, we may replace $k^\times$ by $\bQ/\bZ$, via the map 
    $$
 \{x\in k^\times: \text{the order of $x$ is finite}\}\to \bQ/\bZ,\quad x\mapsto \frac{\log(x)}{2\pi i}.
  $$
  The class $\beta(X,G)\in\rH^2(G,k^\times\otimes\Pic^\vee)$ is represented by the image of \eqref{eqn:intcls} in $(k^\times\otimes\Pic^\vee)^2$ via the diagram above. This image is 
\begin{multline}\label{eqn:betaclassq8}
   ( -1\otimes(e_1+e_2+e_4+e_{10}+e_{11})+(b_2^{-2^{n-3}})\otimes e_3,\\
b_2\otimes(e_4+e_{10}-e_3-e_{11})+(-b_2^{-1})\otimes e_1).
\end{multline}
 To determine whether or not this class vanishes, we map it further to $\rH^3(G,\bZ\otimes\Pic^\vee)$ using the sequence above. In particular, we choose a lift $k^\times\to \bQ$ such that 
 $$
 -1\mapsto \frac12,\quad b_2\mapsto \frac{\log(b_2)}{2\pi i}, \quad -b_2^{-1}\mapsto\frac12-\frac{\log(b_2)}{2\pi i},
 $$
 $$
 b_2^{-2^{n-3}}\mapsto -\frac{\log(b_2)}{2\pi i}\cdot2^{{n-3}}, \quad \text{ where } \quad 
0\leq  \frac{\log(b_2)}{2\pi i}<1.
 $$
 Using this choice, the image of \eqref{eqn:betaclassq8} in $\bZ\otimes\Pic^\vee=\Pic^\vee$, following the diagram above, is given by 
$$
\beta=(-1,  0 , 1,  0 , 0 , 0 ,-1 , 0,  0,  1 , 0),
$$
under the basis $e_1,\ldots,e_{11}.$
On the other hand, the image of $$
\nu: (\Pic^\vee)^2\stackrel{\small\begin{pmatrix}
  1-x \\yx-1 \end{pmatrix}}{\xrightarrow{\hspace*{2cm}}}\Pic^\vee
  $$
  is the $\bZ[G]$-module generated by 
  $$
( 0,  0,  0,  0,  0,  0,  2, 0, 0, -2,  0),
$$
  $$( 1,  0,  0,  0,  0,  0,  1,  0,  0, -2,  1),\quad 
( 0,  1,  0,  0,  0,  0,  1, -1,  0, -1,  1),
$$
$$
( 0,  0,  1,  0,  0,  0,  1,  0,  0, -2,  1),\quad
( 0,  0,  0,  1,  0,  0,  1, -1,  0, -1,  1)
$$
$$
( 0,  0,  0,  0,  1,  0,  1, -1,  0, -2,  0),\quad
( 0,  0,  0,  0,  0,  1,  0,  0, -1,  0,  0).
$$
One can check that $\beta\notin\mathrm{im}(\nu)$. It follows that $\beta (X,G)\ne 0$.

\

Similarly, when $G=\fD_{2^{n-1}}$, using its resolution \eqref{eqn:resolutiondihedral}, we continue \eqref{eqn:firstseq} via
$$
{ \tiny\xymatrix{
(k(T)^\times\otimes\Pic^\vee)^2\ar[rrr]^{\tiny\begin{pmatrix}
  N_x& \!\! \!\! 1+{yx}&\!\! \!\!0\\
  0&\!\! \!\!x-1&\!\! \!\!1+y
  \end{pmatrix}
}&&&(k(T)^\times\otimes\Pic^\vee)^3&&\\
  &&&(k^\times\otimes\Pic^\vee)^3\ar@{^{(}->}[u]
  &&&\\
  &&&(\bQ\otimes\Pic^\vee)^3\ar[u]\ar[rrr]^{\tiny\begin{pmatrix}
  1-x& \!\! \!\! y+1&\!\! \!\!0&\!\! \!\!0\\ 
  0& \!\! \!\! -N_{x}&\!\! \!\!1-{yx}&\!\! \!\!0\\
   0& \!\! \!\! 0&\!\! \!\!1-x&\!\! \!\!1-y   \end{pmatrix}}&&&(\bQ\otimes\Pic^\vee)^4&\\
  &&&&&&(\Pic^\vee)^4\ar@{^{(}->}[u].
  }}
  $$
  The class $\beta(X,G)$ is represented in $(k^\times\otimes\Pic^\vee)^3$ by 
$$
      \left(b_2^{-2^{n-2}}\otimes e_3,\qquad b_2\otimes (e_4+e_{10}-e_1-e_3-e_{11}),\quad e\right),
      $$
      where $e$ is the identity element of $k^\times\otimes\Pic^\vee$.
 We choose a lift $k^\times\to\bQ$ such that 
$$
b_2\mapsto \frac{\log(b_2)}{2\pi i},\quad b_2^{-2^{n-2}}\mapsto -\frac{\log(b_2)}{2\pi i}\cdot2^{n-2},
$$
where 
 $$
0\leq \frac{\log(b_2)}{2\pi i}< 1.
 $$
With this choice, the image of $\beta(X,G)$ in $\rH^3(G,\Pic^\vee)$ is represented in $(\Pic^\vee)^4$ by 
$$
    \beta=( \mathbf 0,
    (0, -\frac{\log(b_2)}{2\pi i}\cdot2^{n-2}, 0, -\frac{\log(b_2)}{2\pi i}\cdot 2^{n-2}, 0, 0, \\
    0, 0, 0, 0, 0),\mathbf 0,\mathbf 0,\mathbf 0),
$$
where $\mathbf 0$ denotes the zero element in $\Pic^\vee$.
Note that 
$$
-\frac{\log(b_2)}{2\pi i}\cdot 2^{n-2}
$$ is an odd integer since $\mathrm{ord}(b_2)=2^{n-2}$.
On the other hand, the intersection of the image of 
$$
\nu: (\Pic^\vee)^3\stackrel{\tiny\begin{pmatrix}
  1-x& \!\! \!\! y+1&\!\! \!\!0&\!\! \!\!0\\ 
  0& \!\! \!\! -N_{x}&\!\! \!\!1-{yx}&\!\! \!\!0\\
   0& \!\! \!\! 0&\!\! \!\!1-x&\!\! \!\!1-y   \end{pmatrix}}{\xrightarrow{\hspace*{4cm}}}(\Pic^\vee)^4
$$
with the subspace $\langle\mathbf 0\rangle\times \langle\mathbf 0\rangle\times \Pic^\vee\times\langle\mathbf 0\rangle$
is generated by the following elements in $\Pic^\vee$:
$$
   ( 1, 0, 1, 0, 0, 0, 0, 0, 0, 0, 0 ),\quad
    ( 0, 1, 0, 1, 1, 0, 1, 1, 0, 0, 0 ),
$$
$$(
0, 0, 0, 0, 2, 0, 2, 2, 0, 0, 0),\quad
    ( 0, 0, 0, 0, 0, 2, 0, 0, 2, 0, 0 ).
$$
One can check that $\beta\not\in\mathrm{im}(\nu)$ and thus $\beta(X,G)\ne 0$.

\

Finally, for $G=\mathfrak{SD}_{2^n}$, using its resolution \eqref{eqn:resolutionsemidihedral}, we continue \eqref{eqn:firstseq} via
$$
{\small \xymatrix{
(k(T)^\times\otimes\Pic^\vee)^2\ar[rr]^{\tiny\begin{pmatrix}
  L_2& 0\\
  L_1& y+1
  \end{pmatrix}
}&&(k(T)^\times\otimes\Pic^\vee)^2&&\\
  &&(k^\times\otimes\Pic^\vee)^2\ar@{^{(}->}[u]
  &&\\
  &&(\bQ\otimes\Pic^\vee)^2\ar[u]\ar[rr]^{\tiny \begin{pmatrix}
  -L_3& \!\!0\\
  L_4&\!\!1-y
  \end{pmatrix}}&&(\bQ\otimes\Pic^\vee)^2&\\
  &&&&(\Pic^\vee)^2\ar@{^{(}->}[u]
  }}
  $$
The class $\beta(X,G)$ is represented in $(k^\times\otimes\Pic^\vee)^2$ by 
\begin{multline*}
    (b_2^{2^{n-4}-1}\otimes e_1+{b_2^{2^{n-4}}}\otimes e_2+b_2^{2^{n-4}+1}\otimes (e_4-e_3)+b_2(e_{10}-e_{11}),\quad -1\otimes e_2).
\end{multline*}
We choose a lift of $k^\times\to \bQ$ such that 
\begin{align*}
-1\mapsto \frac12,\quad b_2^r\mapsto \frac{\log(b_2)}{2\pi i}\cdot r,\quad\forall r\in\bZ,
\end{align*}
where 
 $$
0\leq{\log(b_2)}<{2\pi i}.
 $$
Under this lift, the image of $\beta(X,G)$ in $\rH^3(G,\Pic^\vee)$ is represented in $(\Pic^\vee)^2$ by 
$$
\beta=( (0,1,0,-1,0,0,0,0,0,0),
    (0, 0, 0, 0, 0, 0, 0, 0, 0, 0, 0)).
$$
We find that $\beta$ is not in the image of 
$$
\nu: (\Pic^\vee)^2\stackrel{\small\begin{pmatrix}
  -L_3& \!\!0\\
  L_4&\!\!1-y
  \end{pmatrix}}{\xrightarrow{\hspace*{2cm}}}(\Pic^\vee)^2.
$$
Indeed, one can check that $\beta$ is not in the intersection $\mathrm{im}(\nu)\cap (\Pic^\vee\times \mathbf{0})$, which is the $\bZ[G]$-module generated by 
$$
(( 1,  1, -1, -1,  1,  0,  1, -1,  0, -2,  0), (0,  0,  0 , 0,  0 , 0,  0,  0,  0 , 0 , 0)),
$$
$$
(( 0,  2,  0, -2,  0,  0,  0,  0,  0,  0,  0), (0,  0,  0,  0,  0,  0,  0,  0,  0,  0,  0)).
$$
We conclude that $\beta(X,G)\ne 0$. 
This completes the proof of Lemma~\ref{lemm:k9}.
\end{proof} 

\begin{exam}
\label{exam:bad}
Let $G=\fD_4$ be generated by 
$$
(t_1,t_2,t_3)\mapsto(t_2,-t_1,\frac{1}{t_1t_2t_3}),
\quad
(t_1,t_2,t_3)\mapsto(\frac1{t_1t_2t_3},t_3,t_2).
$$
Here,  $G_T=\langle(-1,-1,-1)\rangle\simeq C_2$ and  $\pi^*(G)=\fK_9$. 
The $G$-action satisfies {\bf (A)} -- the two noncyclic  $\fK_4\subset \fD_4$ map to $C_2$ via $\pi$, and  
fix points on the smooth model $(\bP^1)^3$.
However, $\beta(X,G)$ does not vanish, and the $G$-action fails {\bf (U)}. 
\end{exam}

\begin{rema}
    Using the analysis in Section~\ref{sect:uni}, one can check that when $\pi^*(G)$ strictly contains $\fK_9$, the $G$-action fails Condition {\bf (A)}. 
\end{rema}

\section{Stable linearizability}
\label{sect:final}

In this section, we prove Theorem~\ref{thm:main}, i.e., a criterion for unirationality and stable linearizability of generically free $G$-actions on toric threefolds. 
We assume the necessary Condition {\bf (A)}. 

 \

\noindent  
{\em Step 1.} 
By Theorem~\ref{thm:toricge}, for smooth projective toric varieties $X$, unirationality of the $G$-action is equivalent to the vanishing of the class 
$$
\beta(X, G)\in \rH^2(G,\Pic(X)^\vee\otimes k^\times)=\rH^3(G,\Pic(X)^\vee).  
$$
By Lemma~\ref{lemm:beta}, this class vanishes if and only if it vanishes upon restriction to every $p$-Sylow subgroup of $G$. 

\

\noindent  
{\em Step 2.} When $p\neq 2,3$, the $p$-Sylow subgroup of $G\subset \Aut(T)$ is a subgroup of translations $T(k)\subset\Aut(T)$, see \eqref{eqn:exte}. Since it has fixed points in the boundary $X\setminus T$, the action is unirational, by \eqref{eqn:xg}.

\

\noindent  
{\em Step 3.} For $p=3$, Proposition~\ref{prop:dim33group} implies that unirationality is equivalent to Condition {\bf (A)}.

\

\noindent  
{\em Step 4.} For $p=2$, Proposition~\ref{prop:dim32group} characterizes unirationality, as stated in Theorem~\ref{thm:main}.  Note that
a $G$-action with $\pi^*(G)$ as in \eqref{eqn:bad} fails Condition {\bf (SP)}.

\

\noindent  
{\em Step 5.} 
Stable linearizability of the action is governed by Theorem~\ref{thm:torict}:
assuming unirationality, {\bf (SL)} follows from 
the stable permutation property of $\Pic(X)$, as a $G$-module. 
This property only depends on the image $\pi^*(G)$ in $\GL(\rN)$. The
corresponding actions have been analyzed in \cite{kunyavskii}: 
the $G$-action on $\Pic(X)$ fails {\bf (SP)} if and only if $\pi^*(G)$ contains 
one of the three 2-groups indicated in Theorem~\ref{thm:main}.

\bibliographystyle{plain}
\bibliography{toric3d}

\end{document}